\begin{document}

\title{Transformations generating negative $\beta$-expansions}
\author{Karma Dajani}
\author{Charlene Kalle$^\dagger$}
\address{Department of Mathematics\\
Utrecht University\\
Postbus 80.000\\
3508 TA Utrecht\\
the Netherlands} \email{k.dajani1@uu.nl}
\address{Mathematics Institute \\
University of Warwick \\
Coventry CV4 7AL\\
 United Kingdom}
 \email{c.kalle@warwick.ac.uk }

\thanks{{\scriptsize $^\dagger$This author was supported by the EU FP6 Marie Curie Research Training Network CODY (MRTN 2006 035651).}}

\subjclass{Primary, 37A05, 11K55.} \keywords{negative, greedy and random expansion, absolutely continuous invariant measure, digit sequence}

\maketitle

\begin{abstract}
We introduce a family of dynamical systems that  generate negative $\beta$-expansions and study the support of the invariant measure which is absolutely continuous with respect to Lebesgue measure. We give a characterization of the set of digit sequences that is produced by a typical member of this family of transformations. We discuss the meaning of greedy expansions in the negative sense, and show that there is no transformation in the introduced family of dynamical systems that generates negative greedy. However, if one looks at random algorithms, then it is possible to define a greedy expansion in base $-\beta$.
\end{abstract}

\newtheorem{prop}{Proposition}[section]
\newtheorem{theorem}{Theorem}[section]
\newtheorem{lemma}{Lemma}[section]
\newtheorem{cor}{Corollary}[section]
\newtheorem{remark}{Remark}[section]
\theoremstyle{definition}
\newtheorem{defn}{Definition}[section]
\newtheorem{ex}{Example}[section]

\maketitle

\section{Introduction}

Given a real number $\beta>1$, it is well known that we can write every $x$ in the unit interval as
\begin{equation}\label{q:exp}
x = \sum_{k=1}^{\infty} \frac{b_k}{\beta^k},
\end{equation}
where the $b_k$'s are all taken from the set of integers $\{0,1, \ldots, \lfloor \beta \rfloor\}$. Here $\lfloor \beta \rfloor$ is the largest integer not exceeding $\beta$. The expression (\ref{q:exp}) is called a $\beta$-expansion of $x$ with digits in $\{0,1, \ldots, \lfloor \beta \rfloor\}$ and the sequence $b_1 b_2 \cdots$ is called a digit sequence for $x$. One way to generate such expansions is by iterating the map $ x \mapsto \beta x \, (\mathrm{mod}\, 1)$. The expansions given by this map are the greedy $\beta$-expansions, in the sense that if $b_1, b_2, \ldots, b_{n-1}$ are already known, then $b_n$ is the largest element from the set $\{ 0,1, \ldots, \lfloor \beta \rfloor \}$, such that $\sum_{k=1}^n \frac{b_k}{\beta^k} \le x$.
In \cite{IS09}, Ito and Sadahiro studied a dynamical system that can be used to generate $\beta$-expansions with negative bases. For each real number $\beta>1$, they defined a transformation that generates for each $x$ in some interval an expression of the form
\begin{equation}\label{q:negexpcl}
x = \sum_{k=1}^{\infty} \frac{b_k}{(-\beta)^k} = \sum_{k=1}^{\infty} (-1)^k \frac{b_k}{\beta^k},
\end{equation}
where the digits $b_k$ are again in the set $\{ 0,1, \ldots , \lfloor \beta \rfloor\}$. Their dynamical system generates what they call `greedy expansions in negative base' and is defined on the interval $\big[ \frac{-\beta}{\beta+1}, \frac{1}{\beta+1} \big]$ as follows.
\begin{equation}\label{q:is}
 T x = \left\{
\begin{array}{ll}
-\beta x-\lfloor \beta \rfloor,& \text{if } \displaystyle \frac{-\beta}{\beta+1} \le x \le \frac{1}{\beta +1} - \frac{\lfloor \beta \rfloor}{\beta},\\
\\
-\beta x-j, & \text{if } \displaystyle \frac{1}{\beta+1}-\frac{j+1}{\beta} < x \le \frac{1}{\beta +1}-\frac{j}{\beta}, \quad j \in \{0, 1, \ldots, \lfloor \beta \rfloor -1\}.
\end{array}
\right.\end{equation}
We call expressions of the form (\ref{q:negexpcl}) {\em negative $\beta$-expansions} with digits in $\{ 0,1, \ldots , \lfloor \beta \rfloor\}$. In \cite{FL09}, Frougny and Lai explored the properties of the expansions generated by this transformation and made a further comparison with the $\beta$-expansions as given in (\ref{q:exp}).
\vskip .2cm
In this paper we have a closer look at the dynamics behind negative $\beta$-expansions. For simplicity of the exposition, we only look at the two digit situation, but most of the results are easily generalized to more digits. In Section~\ref{s:trfm} we introduce a family of dynamical systems that generate negative $\beta$-expansions by iterations, and study the support of the invariant measure which is absolutely continuous with respect to Lebesgue measure. In Section~\ref{s:order} we give a characterization of the set of digit sequences that is produced by a typical member of this family of transformations. We discuss the meaning of greedy expansions in the negative sense and show that there is no transformation in the introduced family of dynamical systems that generates negative greedy $\beta$-expansions. However, if one looks at random algorithms, then it is possible to define a greedy expansion in base $-\beta$. This is done in Section~\ref{s:random}, where we also have a look at unique expansions.

\section{Being negative}\label{s:trfm}

Let $\beta >1$ be a real number and consider expansions of the form (\ref{q:negexpcl}) with $b_k \in \{0,1\}$ for each $k \ge 1$. Since all the even $k$'s contribute a non-negative value to the total sum and all the odd $k$'s a non-positive value, the smallest number we can obtain is when $b_k=0$ if $k$ is even and $b_k =1$ if $k$ is odd. Similarly, we get the largest number when $b_k=1$ if $k$ is even and $b_k=0$ for odd values $k$. This gives
\[ M^- = - \sum_{k=1}^{\infty} \frac{1}{\beta^{2k-1}} = \frac{- \beta}{\beta^2-1} \quad \mathrm{and} \quad M^+ = \sum_{k=1}^{\infty} \frac{1}{\beta^{2k}} = \frac{1}{\beta^2 -1}.\]
Hence, every number with an expression of the form (\ref{q:negexpcl}) with $b_k \in \{0,1\}$ for all $k \ge 1$, is an element of the interval $[M^-, M^+]$. We have the following useful proposition.
\begin{lemma}\label{l:firstdigit}
Let $x \in [M^-,M^+]$ and suppose $x$ has the negative $\beta$-expansion
\[ x  = \sum_{k=1}^{\infty} (-1)^k \frac{b_k}{\beta^k},\]
with $b_k \in \{0,1\}$ for all $k \ge 1$.
\begin{itemize}
\item[(i)] if $b_1 = 0$, then $x \in \big[ -\frac{1}{\beta (\beta^2-1)}, M^+\big] $,
\item[(ii)] if $b_1 =1$, then $x \in \big[M^-, \frac{1}{\beta^2-1}-\frac{1}{\beta} \big]$.
\end{itemize}
\end{lemma}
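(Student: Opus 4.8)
The plan is to peel off the first digit and recognise the remaining tail as (a rescaled copy of) an expansion of the same type, so that the bound $[M^-,M^+]$ established just before the lemma can be applied directly. Write
\[
x = \frac{-b_1}{\beta} + S, \qquad S := \sum_{k=2}^{\infty} (-1)^k \frac{b_k}{\beta^k},
\]
using $(-1)^1 = -1$. Reindexing with $j = k-1$ and using $(-1)^{j+1} = -(-1)^j$ gives
\[
S = -\frac{1}{\beta} \sum_{j=1}^{\infty} (-1)^j \frac{b_{j+1}}{\beta^j},
\]
and the sum on the right is again a negative $\beta$-expansion with digits $b_{j+1} \in \{0,1\}$, hence lies in $[M^-, M^+]$ by the computation preceding the lemma. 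Therefore $S \in \big[-\tfrac1\beta M^+,\, -\tfrac1\beta M^-\big] = \big[-\tfrac{1}{\beta(\beta^2-1)},\, \tfrac{1}{\beta^2-1}\big]$.

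For part (i), if $b_1 = 0$ then $x = S$, so $x \in \big[-\tfrac{1}{\beta(\beta^2-1)},\, \tfrac{1}{\beta^2-1}\big] = \big[-\tfrac{1}{\beta(\beta^2-1)}, M^+\big]$, which is exactly the claimed interval. For part (ii), if $b_1 = 1$ then $x = S - \tfrac1\beta$, so $x \in \big[-\tfrac{1}{\beta(\beta^2-1)} - \tfrac1\beta,\ \tfrac{1}{\beta^2-1} - \tfrac1\beta\big]$. The right endpoint is already in the asserted form, and for the left endpoint one simplifies
\[
-\frac{1}{\beta(\beta^2-1)} - \frac{1}{\beta} = -\frac{1 + (\beta^2-1)}{\beta(\beta^2-1)} = -\frac{\beta^2}{\beta(\beta^2-1)} = -\frac{\beta}{\beta^2-1} = M^-,
\]
giving $x \in \big[M^-,\, \tfrac{1}{\beta^2-1} - \tfrac1\beta\big]$, as required.

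There is essentially no hard step here: the only content is the reindexing that exhibits the tail $S$ as $-\tfrac1\beta$ times an admissible expansion, after which everything reduces to the two elementary algebraic identities $-\tfrac1\beta M^+ = -\tfrac{1}{\beta(\beta^2-1)}$ and $-\tfrac1\beta - \tfrac1\beta M^+ \cdot 0 \ldots$ — more precisely $-\tfrac1\beta + (-\tfrac1\beta)M^- $ is not what appears; rather one just checks $-\tfrac{1}{\beta(\beta^2-1)} - \tfrac1\beta = M^-$ as above. If a self-contained argument is preferred instead of invoking $[M^-,M^+]$ on the tail, one can equivalently bound $S$ directly by choosing the digit pattern $b_k = 1$ for odd $k \ge 3$, $b_k = 0$ for even $k$ (for the minimum) and $b_k = 1$ for even $k$, $b_k = 0$ for odd $k$ (for the maximum) and summing the resulting geometric series; this is the step to carry out with care about which index the tail starts at, but it is routine.
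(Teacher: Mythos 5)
Your proof is correct and rests on the same underlying estimate as the paper's: the tail $\sum_{k\ge 2}(-1)^k b_k/\beta^k$ is bounded by its extremal digit patterns. The only difference is packaging — the paper exhibits the extremal pattern explicitly and sums the geometric series, whereas you reindex the tail as $-\tfrac{1}{\beta}$ times a full negative $\beta$-expansion and reuse the bounds $M^-$ and $M^+$ computed just before the lemma, which in passing also verifies the trivial endpoints ($M^+$ in (i), $M^-$ in (ii)) that the paper leaves implicit.
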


\begin{proof}
(i) Suppose $b_1 =0$. Then the minimal value of the expression $\sum_{k=2}^{\infty} (-1)^k \frac{b_k}{\beta^k}$, is achieved if $b_n = 1$ for all odd $n \ge 3$ and $b_n=0$ for all even values of $n$. This gives
\[x \ge \sum_{k=1}^{\infty} (-1)^k \frac{1}{\beta^{2k+1}} = -\frac{1}{\beta^3} \frac{1}{1-1/\beta^2} = -\frac{1}{\beta(\beta^2-1)}.\]
(ii) If $b_1=1$, then the maximal value of $ \sum_{k=2}^{\infty} (-1)^k \frac{b_k}{\beta^k},$ is achieved if $b_n=0$ for all odd values of $n$ and $b_n=1$ for all even values of $n$. Hence,
\[ x \le -\frac{1}{\beta} + \sum_{k=1}^{\infty} \frac{1}{\beta^{2k}} = -\frac{1}{\beta} + \frac{1}{\beta^2-1}. \qedhere\]
\end{proof}

\subsection{Conditions for transformations}
We would like a family of transformations that generate negative $\beta$-expansions with digits in $\{0,1\}$. Therefore, consider the maps $T_j x = -\beta x - j$ for $j \in \{0,1\}$. The family of transformations that we will introduce, use the map $T_0$ on a subinterval of $[M^-,M^+]$ of the form $[\alpha, M^+]$ and $T_1$ on the complement $[M^-, \alpha)$. If we want to iterate such a transformation, then this combination of $T_0$ and $T_1$ needs to map the interval $[M^-, M^+]$ into itself. Note that $T_0 \big[ -\frac{1}{\beta (\beta^2-1)}, M^+ \big] = [M^-, M^-]$ and $T_1 \big[M^-, \frac{1}{\beta^2-1}-\frac{1}{\beta} \big] = [M^-, M^+]$. We can construct a transformation according to the description above if for each $x \in [M^-,M^+]$, either $T_0x \in [M^-,M^+]$ or $T_1 x \in [M^-,M^+]$. Thus, only if the interval $\big[ \frac{1}{\beta^2-1} - \frac{1}{\beta}, -\frac{1}{\beta(\beta^2-1)} \big]$ is non-empty, which happens if and only if $1< \beta \le 2$. This divides $[M^-,M^+]$ into three parts:
\begin{equation}\label{q:switch}
U_1=\Big[M^-, \frac{1}{\beta^2-1}-\frac{1}{\beta} \Big), \quad S=\Big[ \frac{1}{\beta^2-1}-\frac{1}{\beta}, -\frac{1}{\beta(\beta^2-1)}  \Big], \quad U_0 = \Big( -\frac{1}{\beta(\beta^2-1)}, M^+\Big].
\end{equation}
Then $[M^-,M^+]=U_1 \cup S \cup U_0$, where this union in disjoint. On $U_1$ we need to use $T_1$ and on $U_0$ we use $T_0$. Therefore, $U_1$ and $U_0$ are called {\em uniqueness regions}. On $S$ we have a choice between $T_0$ and $T_1$ and this interval is called a {\em switch region}. See Figure~\ref{f:is}(a).

\begin{prop}
Every $x \in [M^-,M^+]$ has an expansion of the form (\ref{q:negexpcl}) with $b_k \in \{0,1\}$ for all $k \ge 1$ if and only if $1 < \beta \le 2$.
\end{prop}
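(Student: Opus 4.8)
The plan is to treat the two implications separately. For both I will use the two intervals appearing in Lemma~\ref{l:firstdigit}, which I abbreviate $A_0 = \big[-\frac{1}{\beta(\beta^2-1)},\, M^+\big]$ and $A_1 = \big[M^-,\, \frac{1}{\beta^2-1}-\frac{1}{\beta}\big]$; a short direct computation with $T_0 x=-\beta x$ and $T_1 x = -\beta x-1$ shows that $A_j$ is precisely the set of $x\in[M^-,M^+]$ with $T_j x\in[M^-,M^+]$. The whole argument then rests on comparing the right endpoint of $A_1$ with the left endpoint of $A_0$.

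For the direction ``only if'' I argue by contraposition: assume $\beta>2$. I would first check that $\frac{1}{\beta^2-1}-\frac{1}{\beta} < -\frac{1}{\beta(\beta^2-1)}$, which after clearing denominators is just $(\beta-2)(\beta+1)>0$, and that nevertheless both of these numbers lie inside $[M^-,M^+]$ (the left one because it is negative, the right one because $\frac{1}{\beta^2-1}-\frac1\beta\ge M^-$ reduces to $\frac1{\beta-1}\ge\frac1\beta$). So the open interval strictly between them is a nonempty subinterval of $[M^-,M^+]$ meeting neither $A_0$ nor $A_1$. By Lemma~\ref{l:firstdigit}, a point there can have no admissible first digit, hence no expansion of the form (\ref{q:negexpcl}) with digits in $\{0,1\}$.

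For the direction ``if'', assume $1<\beta\le2$. The same endpoint comparison now gives $A_0\cup A_1=[M^-,M^+]$, so one may define $T\colon[M^-,M^+]\to[M^-,M^+]$ by $Tx=-\beta x-1$ on $A_1$ and $Tx=-\beta x$ on the remainder (which lies in $A_0$); by the remark above $T$ maps $[M^-,M^+]$ into itself. Reading off the digit $b_1(x)\in\{0,1\}$ from which branch is used and setting $b_k=b_1(T^{k-1}x)$, the defining relation $Tx=-\beta x-b_1(x)$ telescopes, by a one-line induction, to $x=\sum_{k=1}^n(-1)^k\beta^{-k}b_k+(-1)^n\beta^{-n}T^nx$; since the orbit stays in the bounded set $[M^-,M^+]$ and $\beta>1$, the error term vanishes as $n\to\infty$, giving the desired expansion.

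I expect no serious obstacle: the content is the endpoint comparison, and the only thing demanding care is getting the direction of that inequality right (it is $\beta\le 2$, not $\beta\ge 2$, that makes $A_0$ and $A_1$ overlap) and, in the case $\beta>2$, verifying that the uncovered interval genuinely lies inside $[M^-,M^+]$ so that it really witnesses a failure. The convergence step in the ``if'' direction is the routine fact that the orbit of a piecewise-linear map which is expanding in absolute value recovers its starting point from the generated digit string.
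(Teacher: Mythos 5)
Your proof is correct and follows essentially the same route as the paper: the paper's one-line argument rests on exactly the same comparison of the right endpoint of $A_1$ with the left endpoint of $A_0$ via Lemma~\ref{l:firstdigit}, and the constructive ``if'' half that you spell out (the piecewise map on the two regions and the telescoping identity with vanishing remainder) is precisely what the paper carries out right after the proposition when it introduces $R_{\beta,\alpha}$ and shows its digit sequences converge. There are no gaps; you have merely gathered into one place details the paper distributes over the lemma, the preceding discussion of the covering condition, and the subsequent construction.
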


\begin{proof}
By Lemma~\ref{l:firstdigit} we know that all $x \in [M^-,M^+]$ have expansions of the form (\ref{q:negexpcl}) iff $-\frac{1}{\beta(\beta^2-1)} \le \frac{1}{\beta^2-1}-\frac{1}{\beta}$ and this holds iff $\beta \le 2$.
\end{proof}

Suppose that $1 < \beta \le 2$ and let $S$ be as in (\ref{q:switch}). Then for each $\alpha \in S$, define two transformations $L= L_{\beta, \alpha}: [M^-, M^+] \to [M^-,M^+]$ and $R= R_{\beta, \alpha}: [M^-, M^+] \to [M^-,M^+]$ by setting
\[ L\, x = \left\{
\begin{array}{ll}
-\beta x-1,& \text{if } x \le \alpha,\\
-\beta x, & \text{if }  x > \alpha,
\end{array}
\right. \ \text{and } \ R\, x = \left\{
\begin{array}{ll}
-\beta x-1,& \text{if } x < \alpha,\\
-\beta x, & \text{if } x \ge \alpha.
\end{array}
\right. \]

We can define for each $x \in [M^-,M^+]$ the digit sequence $b(x)=b_1(x) b_2(x) \cdots$ given by $R$ by setting for $n \ge 1$,
\[ b_n=b_n(x) = \left\{
\begin{array}{ll}
0,& \text{if } R^{n-1}x \ge \alpha,\\
1, & \text{if }  R^{n-1} x < \alpha.
\end{array}
\right. \]
Then for each $n\ge 1$,
\[ x = \sum_{k=1}^n (-1)^k \frac{b_k}{\beta^k} + (-1)^n \frac{R^n x}{\beta^n}. \]
Since $R^n x \in [M^-,M^+]$ for each $n \ge 1$, this converges and thus, we can write $x = \sum_{k=1}^{\infty} (-1)^k \frac{b_k}{\beta^k}$. Hence, for each $1 <\beta \le 2$ and each choice of $\alpha \in S$, we get a transformation $R_{\beta, \alpha}$ that generates expansions of the form (\ref{q:negexpcl}) with $b_k \in \{0,1\}$, and $x \in [M^-,M^+]$. We give an example.

\begin{remark}{\rm
(i) Note that the transformations $R$ and $L$ only differ at the point $\alpha$. We study the transformation $R$ only, since for any $\alpha$, the transformation $L=L_{\beta, \alpha}$ is isomorphic to the transformation $R_{\beta, \tilde \alpha}$, where $\tilde \alpha = -\frac{1}{\beta+1}-\alpha$. The isomorphism $\theta :[M^-,M^+] \to [M^-,M^+]$ is given by $\theta (x) = -\frac{1}{\beta+1}-x$.\\
(ii) If $\beta=2$, then the switch region $S$ consists of the single point $-\frac{1}{\beta(\beta^2-1)} = \frac{1}{\beta^2-1}-\frac{1}{\beta}$. Then, the maps $L$ and $R$ are both isomorphic to the full one-sided uniform Bernoulli shift on two symbols. Since the same holds for the doubling map $x \mapsto 2 x$ (mod 1), in this case the maps $L$ and $R$ are also both isomorphic to the doubling map. Therefore, we will not consider $\beta=2$ further.
}\end{remark}

\begin{ex}
For two digits, the transformation $T$ studied in \cite{IS09} by Ito and Sadahiro (see (\ref{q:is})) is obtained by taking $L_{\beta, \alpha}$ with $\alpha = \frac{1}{\beta+1}-\frac{1}{\beta}$. We see this map in Figure~\ref{f:is}. Note that the interval $\big[ -\frac{\beta}{\beta+1} , \frac{1}{\beta+1}\big]$ is an attractor, which can be seen from Figure~\ref{f:is}(b).
\begin{figure}[ht]
\centering
\subfigure[$T_0$ and $T_1$]{\includegraphics{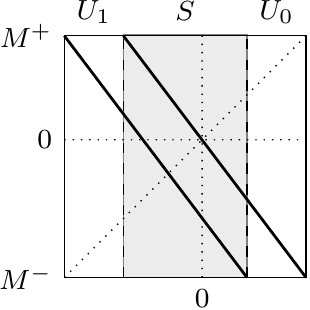}}
\quad
\subfigure[$\alpha = \frac{1}{\beta+1} - \frac{1}{\beta}$]{\includegraphics{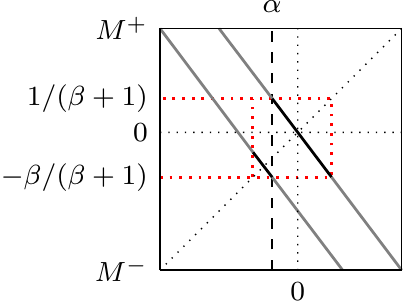}}
\quad
\subfigure[$R_{\beta, \alpha}$ from (b) in the red box]{\includegraphics{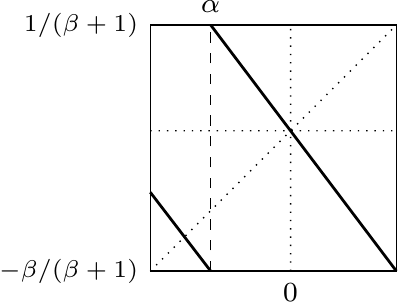}}
\caption{In (a) we see the full maps $T_0$ and $T_1$ and in (b) we see the map $R_{\beta, \alpha}$ with $\alpha=\frac{1}{\beta+1}-\frac{1}{\beta}$, the choice from \cite{IS09}. In (c) we see this transformation on the interval $\big[-\frac{\beta}{\beta+1}, \frac{1}{\beta+1}\big]$.}
\label{f:is}
\end{figure}
\end{ex}

\subsection{Attract and support}

For two digits $\{0,1\}$ any transformation $R=R_{\beta, \alpha}$ with $1< \beta < 2$ and $\alpha \in S$ has exactly one point of discontinuity. By results from Li and Yorke (\cite{LY78}), there is a unique invariant probability measure absolutely continuous with respect to Lebesgue ({\em acim}). From the same results, it follows immediately that this measure is ergodic and that the support of the acim is a forward invariant set, which contains an interval that has $\alpha$ as an interior point. It remains to determine what the support of the acim is.

We can easily identify such a forward invariant set, by using the images of $\alpha$ under $T_0$ and $T_1$. Note that the middle of the interval $S$ is the point $- \frac{1}{2(\beta+1)}$. By symmetry it is enough to consider $\alpha \le -\frac{1}{2(\beta+1)}$.

First suppose that $\alpha \le -\frac{1}{\beta(\beta+1)}$, see Figure~\ref{f:attract}(a). Then $\beta^2 \alpha \le -\beta \alpha -1$ and $-\beta^3 \alpha -1 \le -\beta \alpha$. Consider the interval $[\beta^2 \alpha, -\beta \alpha]$. Then
\[ R [\beta^2 \alpha, -\beta \alpha] \subseteq [-\beta \alpha -1, -\beta^3 \alpha -1] \cup [\beta^2 \alpha, -\beta \alpha] \subseteq [\beta^2 \alpha, -\beta \alpha].\]
Thus, the interval $[\beta^2 \alpha, -\beta \alpha]$ is forward invariant with $\alpha$ in its interior, which implies that it contains the support of the acim.

If $\alpha > -\frac{1}{\beta(\beta+1)} $, then $ -\beta \alpha -1 < \beta^2 \alpha$. See Figure~\ref{f:attract}(b). Consider the interval $[-\beta \alpha -1, -\beta \alpha]$. Then,
\[ R[-\beta \alpha -1, -\beta \alpha] \subseteq [-\beta \alpha -1, \beta^2 \alpha +\beta -1] \cup [\beta^2 \alpha, -\beta \alpha].\]
Hence, in this case the interval $[-\beta \alpha -1, -\beta \alpha]$ contains the support of the acim.

In case $\alpha > -\frac{1}{2(\beta+1)}$, for $\alpha \le -\frac{\beta-1}{\beta(\beta+1)}$, the invariant set is $[-\beta \alpha -1, -\beta \alpha]$ and for $\alpha > -\frac{\beta - 1}{\beta(\beta+1)}$, the invariant set is $[-\beta \alpha -1, \beta^2 \alpha + \beta -1]$.
\begin{figure}[ht]
\centering
\subfigure[$\alpha < -\frac{1}{\beta(\beta+1)}$]{\includegraphics{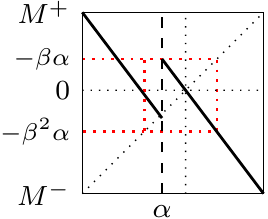}
\quad
\includegraphics{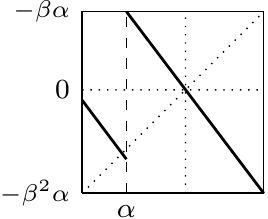}}
\hspace{1cm}
\subfigure[$-\frac{1}{\beta(\beta+1)} < \alpha < -\frac{1}{2(\beta+1)}$]{\includegraphics{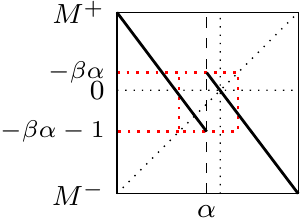}
\quad
\includegraphics{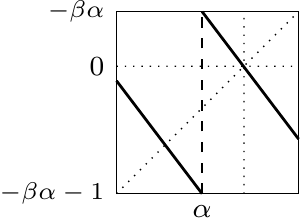}}
\caption{Two choices of $\alpha$ for the same $\beta$ that give different forward invariant sets. For both (a) and (b), the map on the right is the map on the left restricted to this forward invariant set.}
\label{f:attract}
\end{figure}

We consider an example in which we can identify the support.
\begin{ex}
Let $\alpha$ be one of the two endpoints of $S$, so $\alpha = \frac{1}{\beta^2-1} - \frac{1}{\beta}$, or $\alpha = -\frac{1}{\beta(\beta^2-1)}$, and let $R=R_{\beta, \alpha}$. See Figure~\ref{f:support} for examples with $\alpha = -\frac{1}{\beta(\beta^2-1)}$.

To identify the support of these transformations, by symmetry it is enough to consider only one of the two. Take  $\alpha = -\frac{1}{\beta(\beta^2-1)}$. The fixed points of $T_0$ and $T_1$ are important. For $T_0$ the fixed point is $0$ and for $T_1$ this is $-\frac{1}{\beta+1}$. \\
(i) First assume that $-\beta \alpha -1 >0$. See Figure~\ref{f:support} (a). Then the set $[M^-,\beta^2 \alpha + \beta] \cup [-\beta \alpha -1, M^+]$ is forward invariant. Moreover, if we take an interval $[a,b] \subseteq [M^-, M^+]$ with $\alpha \in (a,b)$ and such that $b \le \beta^2 \alpha + \beta$, then for $n \ge 1$ small enough,
\[ R^n(\alpha, b) = \Big( R^n b, \frac{1}{\beta^2-1} \Big) \, \text{ for odd $n$ and } \, R^n(\alpha, b) = \Big( -\frac{\beta}{\beta^2-1}, R^n b \Big) \, \text{ for even $n$}. \]
Since $R$ is expanding, the Lebesgue measure of this interval grows with a factor $\beta$ with each iteration. Hence, after some $n$, $[M^-, \alpha] \subseteq R^n(\alpha, b)$. This implies that
\[ R^{n+2} (a,b) = [M^-,\beta^2 \alpha + \beta] \cup [-\beta \alpha -1, M^+].\]
Hence, the support of the acim of $R$ is exactly the set $[M^-,\beta^2 \alpha + \beta] \cup [-\beta \alpha -1, M^+]$, i.e., the union of two disjoint intervals.\\
(ii) Now, assume that $-\beta \alpha -1 \le 0$. See Figure~\ref{f:support} (b). Then, for any interval $[a,b] \subseteq [M^-, M^+]$ with $\alpha \in (a,b)$, there is an $n\ge 1$, such that $[M^-, \alpha] \subseteq R^n (\alpha, b)$. Since $-\beta \alpha -1 \le 0$, we have that
\[ [M^-, \alpha] \cup R[M^-, \alpha] \cup R^2[M^-, \alpha] = [M^-, M^+].\]
Hence, the acim in this case is fully supported.
\vskip .1cm
If $\alpha =  \frac{1}{\beta^2-1} - \frac{1}{\beta}$, then for $-\beta \alpha < -\frac{1}{\beta+1}$ the support is $[M^-, -\beta \alpha] \cup [\beta^2 \alpha -1, M^+]$ and if $-\beta \alpha \ge -\frac{1}{\beta+1}$, then the support is the whole interval $[M^-,M^+]$.
\begin{figure}[ht]
\centering
\subfigure[$-\beta \alpha -1 >0$]{
\includegraphics{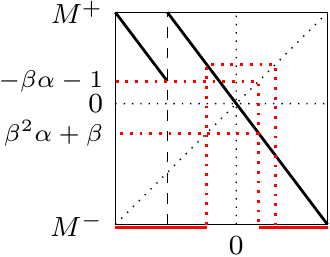}}
\hspace{.2cm}
\subfigure[$-\beta \alpha -1 \le 0$]{
\includegraphics{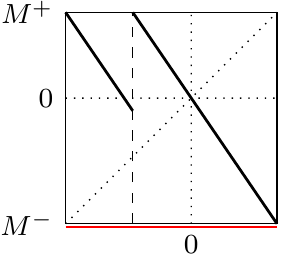}}
\hspace{.2cm}
\subfigure[A map from Example~\ref{x:three}]{\includegraphics{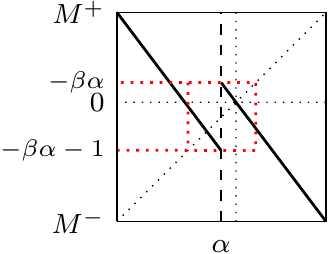}
\includegraphics{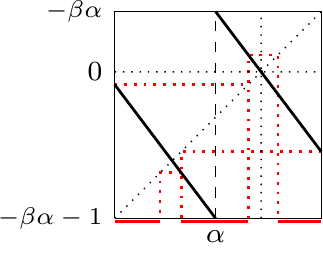}}
\caption{In (a) and (b) we see two cases for the support of the acim of $R_{\beta, \alpha}$ with $\alpha = -\frac{1}{\beta(\beta^2-1)}$. In (c) is an example of a map of which the support consists of at least three disjoint intervals. On the left hand side is the complete picture and we see that the interesting dynamics happens in the red box. On the right hand side we see the map in this red box and we can identify the three intervals.}
\label{f:support}
\end{figure}
\end{ex}

In general, for arbitrary choices of $\alpha$ the support is always a union of closed, disjoint intervals, but many things can happen. We give an example where the number of intervals is at least three. Liao and Steiner (\cite{LS10})
have explicit constructions of examples of transformations of which the support of the acim is a union of more than three intervals. To be more precise, they gave examples of acims of which the support is the union of a number of intervals from the sequence $1,2,5,10,21,22,45,46,\ldots$.

\begin{ex}\label{x:three}
Take $\beta$ such that $\beta^3-\beta -1=0$ and let $\alpha \in \big( -\frac{1}{\beta^2 (\beta+1)}, -\frac{\beta-1}{\beta^2} \big)$. Then
\begin{equation}\label{q:sup}
-\frac{1}{\beta+1} < \beta^2 \alpha < \alpha, \quad \text{and} \quad \alpha < \beta^2 \alpha + \beta -1 < 0.
\end{equation}
Define the set
\[ [-\beta \alpha-1, -\beta^3 \alpha -1] \cup [\beta^2 \alpha, \beta^2 \alpha + \beta -1] \cup [-\beta^3 \alpha -\beta^2 + \beta, -\beta \alpha].\]
Then, by (\ref{q:sup}), this is a forward invariant set. Moreover, it contains $\alpha$ in its interior. Thus, the support of the acim must be contained in this set. Since for any interval containing $\alpha$ in its interior, the forward image has nonempty intersections with both of the other two intervals, the support of the acim intersects all three of these intervals. See Figure~\ref{f:support}(c) for an example.
\end{ex}

\subsection{Invariant Density}

Invariant densities for piecewise linear increasing maps have been thoroughly investigated (see e.g., \cite{Gora09},  \cite{Kopf90}). We use a trick by Hofbauer (\cite{Hof81}) to view our map $R=R_{\beta,\alpha}$ as a factor of a piecewise linear and increasing map $T=T_{\beta,\alpha}$. This allows one to derive the invariant density for the $R$ map using the invariant density for the $T$ map. To do this, we first view $R$ as a map on $\big[0,\frac{1}{\beta-1} \big]$ as follows. Let
$\phi:\big[\frac{-\beta}{\beta^2-1},\frac{1}{\beta^2-1}\big]\to \big[0,\frac{1}{\beta-1}\big]$ be given by
\[\phi(x)=x+\displaystyle\frac{\beta}{\beta^2-1}.\]

\noindent Define $W=W_{\beta,\alpha}:\big[0,\frac{1}{\beta-1}\big]\to \big[0,\frac{1}{\beta-1}\big]$ by
\[W(x)=\phi\circ R\circ \phi^{-1}(x)
\left\{
\begin{array}{ll}
-\beta x+\displaystyle\frac{1}{\beta-1},& \text{if }  x\in \Big[0,\alpha+\displaystyle\frac{\beta}{\beta^2-1}\Big],\\
\\
-\beta x+\displaystyle\frac{\beta}{\beta-1}, & \text{if } x\in \Big(\alpha+\displaystyle\frac{\beta}{\beta^2-1},\displaystyle\frac{1}{\beta-1} \Big].
\end{array}
\right.\]

\noindent Define $T=T_{\beta,\alpha}:\big[0,\frac{2}{\beta-1}\big]\to \big[0,\frac{2}{\beta-1}\big]$ by
\[ T (x) = \left\{
\begin{array}{ll}
\displaystyle\frac{2}{\beta-1}-W(x),& \text{if }  x\in \Big[0, \displaystyle \frac{1}{\beta-1}\Big],\\
\\
W\Big(\displaystyle\frac{2}{\beta-1}-x\Big), & \text{if } x\in \Big(\displaystyle\frac{1}{\beta-1},\displaystyle\frac{2}{\beta-1} \Big].
\end{array}
\right.\]

\noindent We see these maps in Figure~\ref{f:w}.
\begin{figure}[ht]
\centering
\subfigure[The map $R_{\beta, \alpha}$ from Figure~\ref{f:support}(c)]{\includegraphics{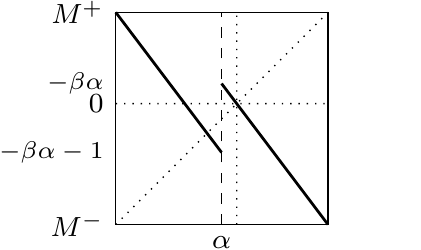}}
\quad
\subfigure[The maps $T$ and $W$ for $R_{\beta, \alpha}$]{\includegraphics{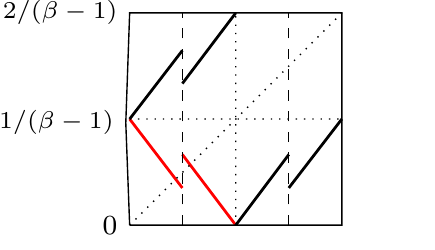}}
\caption{The maps $T$ and $W$ for a map $R_{\beta, \alpha}$. In (b), the red lines indicate the map $T$ and the black lines the map $W$.}
\label{f:w}
\end{figure}

\noindent Finally, define the map $\tau:\big[0,\frac{2}{\beta-1}\big]\to \big[0,\frac{1}{\beta-1}\big]$ by
\[ \tau (x) = \left\{
\begin{array}{ll}
x,& \text{if }  x\in \Big[0,\displaystyle\frac{1}{\beta-1}\Big],\\
\\
\displaystyle\frac{2}{\beta-1}-x, & \text{if } x\in \Big(\displaystyle\frac{1}{\beta-1},\displaystyle\frac{2}{\beta-1} \Big].
\end{array}
\right.\]

\noindent Then $W\circ \tau=\tau \circ T$, and it is easily seen that $\tau$ is a factor map. Notice that $\tau$ is 2-to-1 map, and that
the map $T$ is symmetric around the origin with
\[T\left( \Big(0,\frac{1}{\beta-1} \Big)\right)=\Big(\frac{1}{\beta-1},\frac{2}{\beta-1}\Big) \quad
\text{ and }\quad T\left( \Big(\frac{1}{\beta-1},\frac{2}{\beta-1} \Big)\right)=\Big(0,\frac{1}{\beta-1}\Big).\]
Thus if $h$ is the invariant density for $T$, then the invariant density for $W$ is given by
\[ g(x)=h(x)+h\Big(\frac{2}{\beta-1}-x\Big)=2h(x).\]
From this it follows that the non-normalized invariant density for the $R$ map is given by
\[k(x)=g\Big(x+\frac{\beta}{\beta^2-1}\Big)=2h\Big(x+\frac{\beta}{\beta^2-1}\Big),\]
for $x\in \big[\frac{-\beta}{\beta^2-1},\frac{1}{\beta^2-1}\big]$.

\section{Orderings}\label{s:order}
Let $1< \beta < 2$ and take $\alpha \in S$. Let $R=R_{\beta, \alpha}$ be the corresponding negative $\beta$-transformation. We can give a characterization of the digit sequences generated by $R$. For $\{0, 1\}^{\mathbb N}$, define the ordering $\prec$, which is called the {\em alternate ordering} as follows. We say that $b=b_1 b_2 \cdots \prec d=d_1 d_2 \cdots$ if and only if there is an $n \ge 1$, such that $b_k = d_k$ for all $1 \le k \le n-1$ and $(-1)^n (b_n-d_n) <0$. Then $b \preceq d$ if and only if $b=d$ or $b \prec d$. We can define $\succ$ and $\succeq$ similarly.
\begin{lemma}\label{l:preserveorder}
Let $x,y \in [M^- , M^+]$, and let $b(x), b(y)$ be the corresponding digit sequences generated by $R$. Then $x < y$ if and only if $b(x) \prec b(y)$.
\end{lemma}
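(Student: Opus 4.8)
The plan is to prove that the map $x\mapsto b(x)$ is injective and order-preserving from $\big([M^-,M^+],<\big)$ to its image with the alternate ordering $\prec$, and then to deduce the converse implication from the fact that $\prec$ is a strict order. Injectivity is immediate from the construction, since $x=\sum_{k\ge 1}(-1)^kb_k(x)\beta^{-k}$ shows that $b(x)=b(y)$ forces $x=y$; hence, whenever $x\ne y$, the sequences $b(x)$ and $b(y)$ differ and there is a well-defined least index $n\ge 1$ with $b_k(x)=b_k(y)$ for $k<n$ and $b_n(x)\ne b_n(y)$.

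The key step is to observe that iterating $R$ along a fixed initial digit pattern is an affine map. One checks by induction on $k$ (using $b_1(R^{k-1}z)=b_k(z)$) that $R^kz=T_{b_k(z)}\circ\cdots\circ T_{b_1(z)}(z)$, where $T_jw=-\beta w-j$ and the empty composition ($k=0$) is the identity. Therefore, if $x$ and $y$ agree in their first $k$ digits, then $R^kx$ and $R^ky$ are obtained from $x$ and $y$ by one and the same affine map of slope $(-\beta)^k$, so $R^ky-R^kx=(-\beta)^k(y-x)$. Consequently, for $x<y$ one has $R^kx<R^ky$ if $k$ is even and $R^kx>R^ky$ if $k$ is odd; in particular $R^kx\ne R^ky$.

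With this in hand, the forward implication runs as follows. Assume $x<y$ and let $n$ be the least index at which $b(x)$ and $b(y)$ differ; apply the previous observation with $k=n-1$. Since $b_n(z)=1$ exactly when $R^{n-1}z<\alpha$ and $b_n(z)=0$ exactly when $R^{n-1}z\ge\alpha$, the fact that $b_n(x)\ne b_n(y)$ means precisely one of $R^{n-1}x,R^{n-1}y$ lies below $\alpha$, and that one is the strictly smaller of the two. If it is $R^{n-1}x$, then $b_n(x)=1$, $b_n(y)=0$, and $R^{n-1}x<R^{n-1}y$ forces $n-1$ even, so $(-1)^n(b_n(x)-b_n(y))=(-1)^n=-1<0$; if it is $R^{n-1}y$, then $b_n(x)=0$, $b_n(y)=1$, and $R^{n-1}x>R^{n-1}y$ forces $n-1$ odd, so $(-1)^n(b_n(x)-b_n(y))=-(-1)^n=-1<0$. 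In both cases $b(x)\prec b(y)$.

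Finally, for the converse I would argue: if $b(x)\prec b(y)$ then $x\ne y$ (else $b(x)=b(y)$, contradicting irreflexivity of $\prec$), and $x>y$ is impossible (else the forward implication would give $b(y)\prec b(x)$, but $\prec$ cannot hold in both directions, since the defining inequality at the common first-difference index cannot have both signs); hence $x<y$. The argument is essentially bookkeeping, and the only delicate point is the interaction between the parity of the first-difference index $n$ and the orientation reversal caused by the slope $-\beta$ — namely, that the smaller of $R^{n-1}x$ and $R^{n-1}y$ is exactly the one carrying digit $1$, with its position relative to the other pinned down by $(-1)^{n-1}$. I do not expect any real obstacle beyond this.
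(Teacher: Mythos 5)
Your proposal is correct and takes essentially the same route as the paper: your affine relation $R^{n-1}y-R^{n-1}x=(-\beta)^{n-1}(y-x)$ along a common digit prefix is just a rewriting of the paper's identity $x=\sum_{k=1}^{n-1}(-1)^k b_k(x)\beta^{-k}+(-1)^{n-1}\beta^{-(n-1)}R^{n-1}x$, and the parity-versus-digit bookkeeping for the forward direction and the trichotomy argument for the converse match the paper's proof. No gaps.
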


\begin{proof}
Suppose $x < y$, then $b(x) \neq b(y)$. Let $n\ge 0$ be the first index such that $b_{n+1}(x) \neq b_{n+1}(y)$, then \begin{eqnarray*}
x &=& \sum_{k=1}^n (-1)^k \frac{b_k(x)}{\beta^k} + (-1)^n \frac{R^n x }{\beta^n} =  \sum_{k=1}^n (-1)^k \frac{b_k(y)}{\beta^k} + (-1)^n \frac{R^n x }{\beta^n}\\
&<&  \sum_{k=1}^n (-1)^k \frac{b_k(y)}{\beta^k} + (-1)^n \frac{R^n y }{\beta^n}=y.\\
\end{eqnarray*}
This implies that $(-1)^nR^n x<(-1)^nR^n y$. If $n$ is even, then $R^n x<R^n y$ implying that $b_{n+1}(y)<b_{n+1}(x)$. If $n$ is odd, then
$R^n y<R^n x$, so $b_{n+1}(x)<b_{n+1}(y)$. In either case, we have $b(x) \prec b(y)$.

Conversely, if $b(x) \prec b(y)$, then $x\not= y$. If $y > x$, then by the first part of the proof we have $b(y) \prec b(x)$ which is a contradiction. Hence, $x<y$.
\end{proof}

So, under any transformation $R_{\beta, \alpha}$ the alternate ordering respects the natural ordering on $\mathbb R$.

\subsection{Characterizing sequences}
We want to have a characterization of the sequences that are generated by a transformation $R=R_{\beta, \alpha}$. Let $\Sigma_R$ denote the set of all digit sequences generated by $R$. We use $\Delta (b_1 \cdots b_n)$ to denote the fundamental interval in $[M^-,M^+]$ specified by the digits $b_1 , \ldots, b_n$:
\[ \Delta(b_1 \cdots b_n) = \{ x \in [M^-,M^+] \, : \, b_j(x) = b_j, \, 1 \le j \le n\}.\]
Results from \cite{Hof81} by Hofbauer give that a sequence $b=b_1 b_2 \cdots \in \{0,1 \}^{\mathbb N}$ is generated by $R$ if and only if for each $n \ge 1$,
\begin{equation}\label{q:btilde}
 \text{if } b_n=1, \, \text{then } b(M^-) \preceq \ b_n b_{n+1} \cdots \prec \tilde b(\alpha), \ \text{and if } b_n =0, \, \text{then } b(\alpha) \preceq b_n b_{n+1} \cdots \preceq b(M^+),
\end{equation}
where
\[ \tilde b(\alpha) = \lim_{t \uparrow \alpha, \, t \in \Delta(1)} b(t).\]
We want to give a description of the sequence $\tilde b(\alpha)$ and therefore we define a sequence of transformations first. This sequence is obtained by alternating the transformations $L$ and $R$. First, let $L_0$ be the identity and $L_1=L$. Then, for $n \ge 1$, set $L_{2n} = (R \circ L)^n$ and $L_{2n+1} = L \circ (R \circ L)^n$. We use this sequence $\{L_n\}_{n \ge 0}$ to make digit sequences of points in $[M^-,M^+]$. For $n \ge 1$, let
\[d_{2n-1} (x) = \left\{
\begin{array}{ll}
1, &  \text{if }\ L_{2n-2} x \le \alpha,\\
0, & \text{if }\ L_{2n-2} x > \alpha,
\end{array}
\right. \ \text{and }\  d_{2n} (x) = \left\{
\begin{array}{ll}
1, & \text{if }\ L_{2n-1} x < \alpha,\\
0, &  \text{if }\ L_{2n-1} x \ge \alpha.
\end{array}
\right.
\]
Then $d(x) = d_1(x) d_2(x) \cdots$.

\begin{remark}\label{r:nodifference}
{\rm Note that for each $x$ such that $R^n x \neq \alpha$ for all $n \ge 0$, we have $R^n x =L^n x =L_n x$ for each $n$. Also for the digit sequence $d(x)$, the difference between even and odd indexed digits is only in the point $\alpha$ itself. So, for each $x$ such that $R^n x \neq \alpha$ for all $n \ge 0$, the digit sequences $b(x)$ and $d(x)$ are equal. In the above, it is crucial that $x\not= \alpha$, otherwise the remark is not true. To see this, let $\beta=\frac{1+\sqrt{5}}{2}$, and
$\alpha=-\frac{1}{\beta^2}$, then $R^n\alpha\not= \alpha$ for all $n\ge 1$, but $R^n\alpha\not= L^n\alpha =\alpha$. Also, $b(\alpha)=001010101010\cdots$, while $d(\alpha)=100101010101\cdots$.
}\end{remark}

The next lemma says that the digit sequence $d(x)$ gives negative $\beta$-expansions.

\begin{lemma}\label{l:d}
For each $x \in [M^-,M^+]$ and each $n \ge 1$, we have
\begin{equation}\label{q:d}
x = \sum_{k=1}^n (-1)^k \frac{d_k(x)}{\beta^k} + (-1)^n \frac{L_n x}{\beta^n},
\end{equation}
and thus $x = \sum_{k=1}^{\infty} (-1)^k \frac{d_k(x)}{\beta^k}$.
\end{lemma}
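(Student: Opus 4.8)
The plan is to prove \eqref{q:d} by induction on $n$, following the same pattern used to establish the analogous identity for $b(x)$ earlier in the paper, but being careful about the alternating roles of $L$ and $R$ encoded in the definition of $L_n$ and $d_n(x)$. The base case $n=1$ splits into the two branches of the definition of $d_1(x)$: if $L_0 x = x \le \alpha$ then $d_1(x)=1$ and $L_1 x = Lx = -\beta x - 1$, so $-\frac{d_1(x)}{\beta} - \frac{L_1 x}{\beta} = -\frac{1}{\beta} - \frac{-\beta x - 1}{\beta} = x$; if $x > \alpha$ then $d_1(x)=0$ and $L_1 x = -\beta x$, giving $-\frac{L_1 x}{\beta} = x$. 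So \eqref{q:d} holds for $n=1$.

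For the inductive step, I would distinguish whether we are passing from an even index to an odd one or vice versa, because $L_{2n} = R \circ L_{2n-1}$ while $L_{2n+1} = L \circ L_{2n}$; that is, the map applied at the next step alternates between $R$ (used to define odd-indexed digits) and $L$ (used to define even-indexed digits), which is exactly what the definitions of $d_{2n-1}(x)$ and $d_{2n}(x)$ track. Concretely, suppose \eqref{q:d} holds for some $n$; write $y = L_n x$. Then I claim $y = -\frac{d_{n+1}(x)}{\beta} + \frac{L_{n+1} x}{\beta}$ up to sign, i.e. $(-1)^{n+1}\frac{d_{n+1}(x)}{\beta^{n+1}} + (-1)^{n+1}\frac{L_{n+1}x}{\beta^{n+1}} = (-1)^n \frac{L_n x}{\beta^n}$. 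Dividing through by $(-1)^n/\beta^n$, this reduces to checking $-\frac{d_{n+1}(x)}{\beta} - \frac{L_{n+1}x}{\beta} = L_n x = y$, and in each of the four cases (the two parities of $n$, each with the two cases in the branch defining $d_{n+1}(x)$) this is the same one-line verification as in the base case, using that $L_{n+1} x$ is either $-\beta y$ or $-\beta y - 1$ according to which branch of $L$ or $R$ applies, and that the branch is selected by precisely the condition defining $d_{n+1}(x)$. Substituting into the inductive hypothesis yields \eqref{q:d} for $n+1$.

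Finally, to pass from the finite identity to $x = \sum_{k=1}^{\infty} (-1)^k \frac{d_k(x)}{\beta^k}$, I would observe that $L_n x \in [M^-, M^+]$ for all $n$ — this is immediate since $L$ and $R$ both map $[M^-,M^+]$ into itself, hence so does any composition, hence so does $L_n$. Therefore the remainder term satisfies $\left| (-1)^n \frac{L_n x}{\beta^n} \right| \le \frac{\max(|M^-|, M^+)}{\beta^n} \to 0$ as $n \to \infty$, since $\beta > 1$. Letting $n \to \infty$ in \eqref{q:d} gives the convergence of the series to $x$.

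I do not expect a genuine obstacle here; the only thing requiring care is the bookkeeping of the alternation between $L$ and $R$ in the definition of $L_n$ versus the alternation between the odd and even digit definitions, and checking that they line up so that the branch of the affine map used at step $n+1$ is exactly the one picked out by $d_{n+1}(x)$. Once that correspondence is spelled out, the proof is a routine induction identical in spirit to the one already given for $b(x)$ just before Lemma~\ref{l:firstdigit}'s surrounding discussion.
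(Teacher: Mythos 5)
Your proof is correct and takes essentially the same route as the paper: the paper's one-line proof is precisely the recurrence $L_n x = -\beta L_{n-1}x - d_n(x)$ to which your induction reduces, together with the same remainder estimate from $L_n x \in [M^-,M^+]$ and $\beta>1$. One small bookkeeping slip in your parenthetical labels: $L$ (whose branch is selected by $\le \alpha$) is the map applied at odd steps, matching the definition of $d_{2n-1}$, while $R$ (branch selected by $< \alpha$) is applied at even steps, matching $d_{2n}$ --- the labels are swapped in your text, but this does not affect the reduction or the four-case verification.
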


\begin{proof}
The lemma follows easily by observing that for each $n\ge 1$, we have $L_n x = -\beta L_{n-1} x -d_n(x)$.
\end{proof}

The next theorem gives a characterization of the digit sequences generated by $R$.
\begin{theorem}\label{t:kar}
Let $b=b_1 b_2 \cdots \in \{0,1 \}^{\mathbb N}$. Then, $b \in \Sigma_R$ if and only if for all $n \ge 1$,
\begin{equation}\label{q:kar}
\text{if } b_n=1, \, \text{then } b(M^-) \preceq \ b_n b_{n+1} \cdots \prec d(\alpha), \, \text{and if } b_n =0, \, \text{then } b(\alpha) \preceq b_n b_{n+1} \cdots \preceq b(M^+).
\end{equation}
\end{theorem}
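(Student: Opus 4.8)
The plan is to reduce the claimed characterization \eqref{q:kar} to the known characterization \eqref{q:btilde} coming from Hofbauer's work, by showing that the only change needed is to replace $\tilde b(\alpha)$ with $d(\alpha)$. Thus the whole proof amounts to establishing the single identity
\[
\tilde b(\alpha) = d(\alpha), \quad \text{where } \tilde b(\alpha) = \lim_{t \uparrow \alpha,\, t \in \Delta(1)} b(t).
\]
Once this is done, \eqref{q:kar} is literally \eqref{q:btilde} with the symbol $\tilde b(\alpha)$ renamed, and there is nothing further to prove. So I would state at the outset that by \cite{Hof81} it suffices to prove this identity, and devote the body of the argument to it.

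To prove $\tilde b(\alpha) = d(\alpha)$, I would analyze the orbit of points approaching $\alpha$ from below inside $\Delta(1)$. First, by Lemma~\ref{l:preserveorder}, the map $t \mapsto b(t)$ is strictly increasing with respect to the alternate ordering $\prec$, so the left limit $\tilde b(\alpha)$ exists as a sequence and equals the supremum (in $\prec$) of $\{b(t) : t < \alpha,\, t \in \Delta(1)\}$. Now I would track what happens under iteration: a point $t$ slightly below $\alpha$ lies in $\Delta(1)$, so its first digit is $1$ and $Rt = -\beta t - 1$; as $t \uparrow \alpha$, $Rt \downarrow -\beta\alpha - 1 = L_1\alpha$ from \emph{above}. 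Since we are now approaching the image point from the right, the relevant comparison at the next step flips: the digit is determined by whether $L_1\alpha \ge \alpha$ or $< \alpha$, which is exactly the even-index rule defining $d_2(x)$ in the definition of $d(x)$ (note the strict/non-strict inequalities there are chosen precisely for this one-sided limit). Continuing inductively, the direction of approach alternates — from above after an odd number of steps, from below after an even number — and this alternation is exactly what the sequence of maps $L_n = L, R\circ L, (R\circ L)\circ L, \ldots$ with the alternating strict/non-strict thresholds in the definition of $d_n$ is built to encode. Hence the digit string read off the limiting orbit of $\alpha$ is precisely $d(\alpha)$, giving $\tilde b(\alpha) = d(\alpha)$.

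Concretely, I would carry this out as an induction on $n$ with induction hypothesis: for $t < \alpha$, $t \in \Delta(1)$, sufficiently close to $\alpha$, the first $n$ digits $b_1(t)\cdots b_n(t)$ equal $d_1(\alpha)\cdots d_n(\alpha)$, and moreover $R^n t$ approaches $L_n\alpha$ from above if $n$ is odd and from below if $n$ is even. The base case $n=1$ is the observation above. For the inductive step I apply $R = -\beta(\cdot) - b_{n+1}$ on the appropriate monotone branch, use that multiplication by $-\beta$ reverses the direction of one-sided approach, and check that the threshold comparison against $\alpha$ with the correct (open or half-open) endpoint convention reproduces $d_{n+1}(\alpha)$ — here one must be slightly careful when $L_n\alpha$ happens to equal $\alpha$, but the definition of $d_n$ was set up (strict $<$ for even indices, $\le$ for odd) exactly so that the limiting digit from the correct side is selected, and Remark~\ref{r:nodifference} already flags that this endpoint bookkeeping at $\alpha$ is the only subtlety.

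The main obstacle is precisely this endpoint/one-sidedness bookkeeping: $\tilde b(\alpha)$ is a \emph{one-sided} limit taken from within $\Delta(1)$, and along the orbit the side from which we approach $\alpha$ (or $M^\pm$) flips with the parity of the alternating expansion, so one must verify that the strict-versus-nonstrict conventions baked into the definition of the $d_n(x)$ match the flipping directions at every step. Getting the base case right (that $t \uparrow \alpha$ with $b_1 = 1$ forces approach to $L_1\alpha$ from above, hence the half-open rule for $d_2$) and then propagating the parity correctly through the induction is where all the care goes; the rest is routine once \eqref{q:btilde} is invoked.
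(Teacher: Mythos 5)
Your proposal is correct and follows essentially the same route as the paper: both reduce the theorem via Hofbauer's criterion (\ref{q:btilde}) to the single identity $\tilde b(\alpha)=d(\alpha)$, and both establish that identity by tracking how the orientation of the one-sided approach to $\alpha$ flips with parity under iteration (the paper phrases this via the sign of $\frac{dR^nx}{dx}=\pm\beta^n$ and whether $\alpha$ is a left or right endpoint of $R^n\Delta(\tilde b_1\cdots\tilde b_n)$, which is exactly your above/below bookkeeping matched to the strict/non-strict conventions in $d_n$). The only cosmetic difference is organizational: the paper splits into the case $L^k\alpha\neq\alpha$ for all $k$ and the case of a first return $L^n\alpha=\alpha$, while your induction treats both uniformly.
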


\begin{proof}
Set $\tilde b=\tilde b(\alpha)$ and $d=d(\alpha)$. By (\ref{q:btilde}) we only need to show that $\tilde b=d$.

\vskip .1cm
First note that if $L^k \alpha \neq \alpha$ for all $k \ge 1$, then $L^k \alpha=L_k \alpha=R^{k-1}L \alpha \not= \alpha$ for all $k \ge 1$.
Hence, $\tilde b = 1b(L \alpha) = 1d(L\alpha) = d$. So, assume $L^k \alpha =\alpha$ for some $k\ge 1$, and let $n$ be the least positive integer such that
$L^n \alpha =\alpha$. Then, $L^j \alpha = R^{j-1} L \alpha = L_j \alpha \not= \alpha$ for $1 \le j \le n-1$, and $L^n \alpha = R^{n-1} L \alpha = L_n \alpha = \alpha$.
Thus, $\tilde b_j=d_j$ for all $1 \le j \le n$, and $L_n \alpha=\alpha$ is an endpoint of $R^n \Delta (\tilde b_1 \cdots \tilde b_n)$.

If $n$ is even, then
$\alpha$ is a right end-point of $R^n \Delta (\tilde b_1 \cdots \tilde b_n)$
so that
$d_{n+1}=1$. Also, for all $x \in \Delta(\tilde b_1 \cdots \tilde b_{n+1})$ we have $\frac{dR^n x}{dx} = \beta^n$, so $R^n x< L_n \alpha = \alpha$. Since $R^n \Delta(\tilde b_1 \cdots \tilde b_{n+1}) \subseteq \Delta(\tilde b_{n+1})$, this implies $\tilde b_{n+1}=1 =d_{n+1}$. Since $L_{n+1} \alpha = (L \circ L_n) \alpha$, $L_{n+1} \alpha$ is an endpoint of the interval $R^{n+1} \Delta (\tilde b_1 \cdots \tilde b_{n+1})$.

On the other hand, if $n$ is odd, then $\alpha$ is a left end-point of $R^n \Delta (\tilde b_1 \cdots \tilde b_n)$ so that
$d_{n+1}=0$. Then, for all $x \in \Delta(\tilde b_1 \cdots \tilde b_{n+1})$, we have $\frac{dR^n x}{dx} = -\beta^n$ and thus $R^n x > L_n \alpha = \alpha$ and $\tilde b_{n+1}= 0 =d_{n+1}$. Now $L_{n+1} \alpha = (R \circ L_n) \alpha$, so also here $L_{n+1} \alpha$ is an endpoint of $R^{n+1} \Delta (\tilde b_1 \cdots \tilde b_{n+1})$.

The same reasoning holds when $L_k \alpha = \alpha$ for a $k >n$, so this gives the theorem.
\end{proof}

\subsection{What is greedy?}
For expansions with a positive non-integer base, there is a well-understood notion of greedy $\beta$-expansions. For numbers that have more than one $\beta$-expansion, the greedy $\beta$-expansion is the one that has the largest digit sequence in the lexicographical ordering. These expansions are the ones that are produced by the map
\[ x \mapsto \left\{
\begin{array}{ll}
\beta x  \, (\text{mod } 1), &  \text{if } x \in [0,1),\\
\\
\beta x - \lfloor \beta \rfloor, & \text{if } x \in \big[ 1 , \frac{\lfloor \beta \rfloor}{\beta -1} \big].
\end{array}
\right.\]
A natural candidate for the negative greedy $\beta$-expansion, would be the one that is largest in the alternate ordering.
\begin{defn}[Greedy expansion]
Let $1 < \beta < 2$. Let $x \in [M^-,M^+]$ have the negative $\beta$-expansion
$ x = \sum_{k=1}^{\infty} (-1)^k \frac{b_k}{\beta^k}$,
with $b_k \in \{0,1\}$ for all $k \ge 1$. Set $b=b_1b_2 \cdots$. Then this expansion is the {\em negative greedy $\beta$-expansion of $x$ with digits in $\{0,1\}$} if for each sequence $d=d_1 d_2 \cdots \in \{ 0,1\}^{\mathbb N}$, such that
$ x = \sum_{k=1}^{\infty} (-1)^k \frac{d_k}{\beta^k}$,
we have $d \preceq b$.
\end{defn}

The next proposition shows that there is no transformation $R_{\beta, \alpha}$ that generates the negative greedy $\beta$-expansion of $x$ with digits in $\{0,1\}$ for all $x \in [M^-,M^+]$.
\begin{prop}
Let $1< \beta <2$. Then there is no $\alpha \in S$, such that for all $x \in [M^-,M^+]$ the digit sequence for $x$ generated by $R_{\beta, \alpha}$ gives the greedy expansion of $x$.
\end{prop}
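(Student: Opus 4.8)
The plan is: given any $\alpha\in S$, produce one point $x\in[M^-,M^+]$ for which the digit sequence $b(x)$ generated by $R:=R_{\beta,\alpha}$ is beaten in the alternate ordering by some other negative $\beta$-expansion of $x$. By the definition of the greedy expansion this already shows $b(x)$ is not greedy, and hence no $R_{\beta,\alpha}$ produces greedy expansions everywhere.

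Two preparatory observations. (a) From the definition of $\prec$: if two negative $\beta$-expansions of the same number first disagree in coordinate $n$, then the expansion having digit $0$ in coordinate $n$ is the $\prec$-larger one when $n$ is odd, and the one having digit $1$ there is the $\prec$-larger one when $n$ is even. (b) Converse of Lemma~\ref{l:firstdigit}: a number $y\in[M^-,M^+]$ has a negative $\beta$-expansion with first digit $0$ \emph{iff} $y\ge-\tfrac{1}{\beta(\beta^2-1)}$, and one with first digit $1$ \emph{iff} $y\le\tfrac{1}{\beta^2-1}-\tfrac1\beta$. Necessity is Lemma~\ref{l:firstdigit}; sufficiency holds because $T_0$ maps $\big[-\tfrac{1}{\beta(\beta^2-1)},M^+\big]$ onto $[M^-,M^+]$ and $T_1$ maps $\big[M^-,\tfrac{1}{\beta^2-1}-\tfrac1\beta\big]$ onto $[M^-,M^+]$, so one expands the image (every point of $[M^-,M^+]$ has an expansion since $\beta\le2$) and prepends the appropriate digit. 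Note that $\beta<2$ makes the two endpoints $-\tfrac{1}{\beta(\beta^2-1)}$ and $\tfrac{1}{\beta^2-1}-\tfrac1\beta$ of $S$ distinct.

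Fix $\alpha\in S$ and distinguish two cases. \emph{Case 1: $\alpha>-\tfrac{1}{\beta(\beta^2-1)}$.} Take any $x$ with $-\tfrac{1}{\beta(\beta^2-1)}\le x<\alpha$; this is a nonempty subset of $[M^-,M^+]$. As $x<\alpha$, $R$ uses the branch $T_1$ at $x$, so $b_1(x)=1$; but $x\ge-\tfrac{1}{\beta(\beta^2-1)}$, so by (b) $x$ has an expansion $d$ with $d_1=0$. Then $b(x)$ and $d$ disagree in coordinate $1$ (odd), with $b(x)$ carrying digit $1$ there, so by (a) $b(x)\prec d$: $b(x)$ is not greedy. \emph{Case 2: $\alpha=-\tfrac{1}{\beta(\beta^2-1)}$.} Here $\alpha<0$ and $\alpha<\tfrac{1}{\beta^2-1}-\tfrac1\beta$. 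Let $x=-\alpha/\beta$. Then $x>0>\alpha$, so $x\in(\alpha,M^+]\subseteq[M^-,M^+]$; $R$ uses $T_0$ at $x$, giving $b_1(x)=0$ and $Rx=T_0x=-\beta x=\alpha$, and since $\alpha\ge\alpha$ also $b_2(x)=0$. But $\alpha\le\tfrac{1}{\beta^2-1}-\tfrac1\beta$, so by (b) the number $\alpha$ has an expansion $f_1f_2\cdots$ with $f_1=1$; then $d:=0\,f_1f_2\cdots$ is, by a direct check, a negative $\beta$-expansion of $-\alpha/\beta=x$. Now $b(x)$ and $d$ agree in coordinate $1$ and disagree in coordinate $2$ (even), with $b(x)$ carrying digit $0$ there, so by (a) $b(x)\prec d$: again $b(x)$ is not greedy.

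Thus for every $\alpha\in S$ some $x$ does not receive its greedy expansion from $R_{\beta,\alpha}$, which is the assertion. Conceptually, to be greedy a transformation would have to separate the digits $0$ and $1$ using the left endpoint of $S$ at odd steps and the right endpoint of $S$ at even steps, whereas $R_{\beta,\alpha}$ uses the single value $\alpha$; these endpoints coincide only when $\beta=2$. The one place where care is genuinely needed is checking that the competing sequence $d$ is an honest negative $\beta$-expansion of $x$, which is exactly where the converse of Lemma~\ref{l:firstdigit} (and behind it the fact that every point has an expansion when $\beta\le2$) is used; everything else is immediate bookkeeping with the alternate ordering.
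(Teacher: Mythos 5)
Your proposal is correct and is essentially the paper's own argument: greediness at the first (odd) step forces digit $0$ on all of $S$, which pins $\alpha$ to an endpoint of $S$ (your Case 1 handles every other $\alpha$ directly), and then a point whose $R$-image lands in $S$ — your witness $x=-\alpha/\beta$, the right endpoint of the paper's interval $I=T_0^{-1}(S)$ — shows the second (even) step must give digit $1$, contradicting that choice of $\alpha$; your explicit converse of Lemma~\ref{l:firstdigit} is exactly what the paper uses implicitly when it says both digits are possible on $S$. One point in your favour: you correctly take the forced endpoint to be $-\tfrac{1}{\beta(\beta^2-1)}$ (the smaller endpoint of $S$ for $1<\beta<2$, matching the paper's later ``odd greedy'' choice of $\alpha$), whereas the paper's proof names $\tfrac{1}{\beta^2-1}-\tfrac{1}{\beta}$, following the reversed labelling of the endpoints of $S$ in (\ref{q:switch}); your version is the accurate one.
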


\begin{proof}
Note that if $x \in S$, then by Lemma~\ref{l:firstdigit} $b_1(x)$ can be either $0$ or $1$. Since we want to get greedy expansions, for each $x \in S$, we need $b_1(x) =0$. Hence, on $S$, we define $R_{\beta , \alpha} x = -\beta x$. This means that $\alpha = \frac{1}{\beta^2-1}-\frac{1}{\beta}$. Now, consider the interval
\[ I=\Big[ \frac{1}{\beta^2} - \frac{1}{\beta(\beta^2-1)}, \frac{1}{\beta^2(\beta^2-1)}\Big] \subseteq  R^{-1}_{\beta, \alpha} S \cap U_0.\]
Then, for each $x \in I$, $b_1(x) =0$ and $R_{\beta, \alpha} x \in S$. To get the greedy expansion for elements $x\in I$, we need to assign the digit $b_2(x) = b_1(R_{\beta, \alpha} x) =1$, which contradicts the previous choice of $\alpha$. Hence, there is no transformation $R_{\beta, \alpha}$ that generates the greedy expansion for all $x \in [M^-,M^+]$.
\end{proof}

Among the family of transformations $\{R_{\beta,\alpha}:\alpha \in S_{\beta}\}$, one can speak of the {\em odd greedy transformation} obtained by choosing
$\alpha = -\frac{1}{\beta(\beta^2-1)}$. Note that if $x$ has two negative $\beta$-expansions with different first digit, i.e., $x = \sum_{k=1}^{\infty}(-1)^k \frac{b_k}{\beta^k} = \sum_{k=1}^{\infty}(-1)^k \frac{d_k}{\beta^k}$ with $b_1=0$ and $d_1 =1$, then $ d_1 d_2 \cdots  \prec b_1 b_2\cdots $ and this choice of $\alpha$ would give $b_1=0$. The next proposition gives a recursive algorithm to obtain the digit sequences of the odd greedy transformation. Let $\ell = \overline{01}$ be the largest sequence in alternate ordering.
\begin{prop}
Let $1< \beta < 2$ and $\alpha = -\frac{1}{\beta(\beta^2-1)}$. Let $b_1 b_2 \cdots \in \{0,1\}^{\mathbb N}$ and $x = \sum_{k=1}^{\infty}(-1)^k\frac{b_k}{\beta^k}$. Then $b_1 b_2 \cdots$ is the digit sequence of $x$ generated by $R=R_{\beta, \alpha}$ if it satisfies the following recursive conditions. Suppose $b_1, b_2, \ldots, b_{n-1}$ are known. If $n$ is odd, then $b_n$ is the smallest element of $\{0,1\}$ such that
\[ \sum_{k=1}^{n-1} (-1)^k \frac{b_k}{\beta^k} - \frac{b_n}{\beta^n} + (-1)^n\frac{1}{\beta^n}\sum_{k=1}^{\infty} (-1)^k \frac{\ell_k}{\beta^k} \le x.\]
If $n$ is even, then $b_n$ is the smallest element of $\{0,1\}$ such that
\[ \sum_{k=1}^{n-1} (-1)^k \frac{b_k}{\beta^k} + \frac{b_n}{\beta^n} + (-1)^n\frac{1}{\beta^n}\sum_{k=1}^{\infty} (-1)^k \frac{\ell_k}{\beta^k} \ge x.\]
\end{prop}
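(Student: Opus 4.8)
The plan is to prove the statement by induction on $n$: I claim that any $b=b_1b_2\cdots\in\{0,1\}^{\mathbb N}$ satisfying the two recursive conditions (with $x=\sum_{k\ge1}(-1)^k b_k/\beta^k$) has $b_k=b_k(x)$ for every $k$, where $b(x)=b_1(x)b_2(x)\cdots$ is the digit sequence produced by $R=R_{\beta,\alpha}$; this is exactly the asserted implication. Two elementary observations drive the argument. First, since $\ell=\overline{01}$ is the largest sequence in the alternate ordering, $\sum_{k\ge1}(-1)^k\ell_k/\beta^k=\sum_{j\ge1}\beta^{-2j}=\tfrac{1}{\beta^2-1}=M^+$, so the tail term $(-1)^n\beta^{-n}\sum_k(-1)^k\ell_k/\beta^k$ in the recursion is simply $(-1)^nM^+\beta^{-n}$. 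Second, the chosen value $\alpha=-\tfrac{1}{\beta(\beta^2-1)}$ is precisely $-M^+/\beta$ (it is the left endpoint of $S$, the value realised by the sequence $0\ell$; cf.\ Lemma~\ref{l:firstdigit}). Combined with the basic identity $x=\sum_{k=1}^n(-1)^k\frac{b_k(x)}{\beta^k}+(-1)^n\frac{R^nx}{\beta^n}$ and the fact that $R^nx\in[M^-,M^+]$ for all $n$, these let me convert the defining condition $R^{n-1}x\ge\alpha$ (resp.\ $R^{n-1}x<\alpha$), which is equivalent to $b_n(x)=0$ (resp.\ $b_n(x)=1$), into precisely the inequalities appearing in the proposition.

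For the inductive step, assume $b_k=b_k(x)$ for all $k<n$ and put $P=\sum_{k=1}^{n-1}(-1)^k b_k/\beta^k$. From $x=P+(-1)^{n-1}\beta^{-(n-1)}R^{n-1}x$ I solve for $R^{n-1}x$; substituting $\alpha=-M^+/\beta$ and clearing the positive factor $\beta^{n-1}$, the statement ``$b_n=0$ satisfies the recursive condition at stage $n$'' becomes, after accounting for the parity of $n$, literally the statement ``$R^{n-1}x\ge\alpha$'', i.e.\ ``$b_n(x)=0$''. (This is the point where the direction of the recursion's inequality — $\le x$ for odd $n$, $\ge x$ for even $n$ — must be tracked: the flip comes from $(-1)^{n-1}$ reversing orientation, together with the extremal tail $(-1)^n\beta^{-n}[M^-,M^+]$ lying on the opposite side.) Hence if $b_n(x)=0$, the smallest admissible digit is $0=b_n(x)$. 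If instead $b_n(x)=1$, then the $b_n=0$ inequality fails, and I verify that $b_n=1$ satisfies it: from $x=P+(-1)^n\beta^{-n}+(-1)^n\beta^{-n}R^nx$ together with $R^nx\le M^+$ one obtains exactly the stage-$n$ inequality with $b_n=1$. So in this case the smallest admissible digit is $1=b_n(x)$. Either way the recursion is forced to select $b_n(x)$. The base case $n=1$ is the same computation with $P=0$ and $R^0x=x$, using the threshold $\alpha$ for the first digit.

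The induction then yields $b=b(x)$, which is what the proposition claims. I also expect to note in passing that $b(x)$ itself satisfies both recursive conditions — this follows from the same chain of equivalences, now read in the other direction, using again only $R^nx\le M^+$ — so the recursion is genuinely well-defined (once the prefix is correct, $b_n=1$ is always admissible, so a smallest admissible digit exists) and the ``if'' is not vacuous. The only genuinely delicate point is the sign and parity bookkeeping in the inductive step: one must make sure that in each parity class the recursion's one-sided inequality really does correspond to $R^{n-1}x$ lying on the correct side of $\alpha$. Everything else — the identity $M^+=\sum_k(-1)^k\ell_k/\beta^k$, the identity $\alpha=-M^+/\beta$, and the estimate $R^nx\le M^+$ — is immediate.
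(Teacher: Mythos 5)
Your proof is correct and is essentially the paper's own argument: both convert the stage-$n$ inequality, using $\sum_{k\ge 1}(-1)^k\ell_k/\beta^k=M^+$, $\alpha=-M^+/\beta$ and the identity $x=\sum_{k=1}^{n-1}(-1)^k b_k(x)/\beta^k+(-1)^{n-1}R^{n-1}x/\beta^{n-1}$, into the dichotomy $R^{n-1}x\ge\alpha$ (digit $0$ admissible) versus $R^{n-1}x<\alpha$ (digit $0$ fails, digit $1$ admissible), the only difference being that the paper compresses your induction by observing it suffices to carry out this computation for $n=1,2$, i.e.\ once per parity. One cosmetic slip: $\alpha=-\frac{1}{\beta(\beta^2-1)}$ is the \emph{right} endpoint of $S$, not the left, but the fact your argument actually uses, $\alpha=-M^+/\beta$, is correct.
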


\begin{proof}
Assume that the sequence $b_1 b_2 \cdots$ satisfies the hypothesis. We want to show that $b_1 b_2$ gives the expansion of $x$ that is generated by $R$, i.e., that $b_n =1$ if $R^{n-1} x < -\frac{1}{\beta(\beta^2-1)}$ and $b_n =0$ if $R^{n-1} x \ge -\frac{1}{\beta (\beta^2-1)}$. It is enough to prove the proposition for $n=1,2$.

\vskip .1cm

Suppose that $b_1=0$. Then, by the hypothesis,
\[ -\frac{0}{\beta} + \frac{0}{\beta^2}-\frac{1}{\beta^3}+\frac{0}{\beta^4} - \cdots = -\frac{1}{\beta (\beta^2-1)} \le x.\]
If $b_1 =1$, then
\[ x < -\frac{0}{\beta} + \frac{0}{\beta^2}-\frac{1}{\beta^3}+\frac{0}{\beta^4} - \cdots = -\frac{1}{\beta (\beta^2-1)}.\]
This shows that in both cases $b_1$ is the digit generated by $R$ and $x = -\frac{b_1}{\beta}-\frac{Rx}{\beta}$. For $n=2$, if $b_2=0$, then
\[ -\frac{b_1}{\beta} + \frac{0}{\beta^2} -\frac{0}{\beta^3} + \frac{1}{\beta^4} -\cdots \ge x=-\frac{b_1}{\beta}-\frac{Rx}{\beta}.\]
Hence, $-\frac{Rx}{\beta} \le \frac{1}{\beta^2(\beta^2-1)}$ and thus $Rx \ge -\frac{1}{\beta(\beta^2-1)}$. If $b_2=1$, then
\[ x=-\frac{b_1}{\beta}-\frac{Rx}{\beta} > -\frac{b_1}{\beta}  + \frac{0}{\beta^2} -\frac{0}{\beta^3} + \frac{1}{\beta^4} -\cdots.\]
Thus, $Rx > -\frac{1}{\beta(\beta^2-1)}$. Again, we see that $b_2$ is the digit generated by $R$. This gives the result.
\end{proof}

\section{The number of negative $\beta$-expansions}\label{s:random}
\subsection{Switch regions and infinitely many expansions}
For all $1< \beta < 2$, we can divide the interval $[M^-,M^+]$ into the switch region $S$ and the uniqueness regions $U_0$ and $U_1$, see (\ref{q:switch}). Then, we can define a random transformation, as was done in \cite{DK03} and \cite{DV05}. Let  $\Omega = \{ 0,1\}^{\mathbb{N}}$ endowed with the product $\sigma$-algebra $\mathcal{F}.$ Let $\sigma : \Omega \to \Omega$ be the left shift, and define $K_{\beta} : \Omega \times [M^-,M^+] \to \Omega \times [M^-,M^+]$ by
\begin{equation*}
K_{\beta}(\omega , x)\, =\, \left\{ \begin{array}{ll}
(\omega ,- \beta x - j ), &\text{if } x\in U_j,\; j \in \{0,1\} ,\\
 & \\
(\sigma (\omega ),- \beta x - \omega_1), & \text{if } x\in S.
\end{array}\right.
\end{equation*}

The elements of $\Omega$ represent the coin tosses (`heads'=1 and `tails'=0) used every time the orbit hits a switch region. Let
\[ d_1\, =\, d_1(\omega , x)\, =\, \left\{ \begin{array}{ll}
1, & {\mbox{ if }}\; x\in U_1 {\mbox{ or }}\; (\omega ,x)\in \{ \omega_1=1\} \times S,\\
  & \\
0, & {\mbox{ if }}\; x\in U_0 {\mbox{ or }}\; (\omega ,x)\in \{ \omega_1=0\} \times S,
\end{array}\right.\]
then
\[
K_{\beta}(\omega ,x)\, =\, \left\{ \begin{array}{ll}
(\omega ,- \beta x - d_1), & {\mbox{ if }}\; x\in U_0 \cup U_1,\\
 & \\
(\sigma (\omega ), -\beta x - d_1), & {\mbox{ if }}\; x\in  S.
\end{array}\right.
\]
Set $d_n=d_n(\omega ,x)=d_1\big( K_{\beta}^{n-1}(\omega ,x)\big)$,
and let $\pi_2 :  \Omega \times [M^-,M^+] \to [M^-,M^+]$ be the canonical projection onto the second coordinate. Then
\[ \pi_2 \left( K_{\beta}^n(\omega ,x)\right) \, =\,
(-1)^n\beta^nx+(-1)^n\beta^{n-1}d_1+(-1)^{n-1}\beta^{n-2}d_2+
\cdots +(-1)^2\beta d_{n-1} +(-1)^1 d_n,\]
and rewriting yields
\[ x\, =\, -\frac{d_1}{\beta} + \frac{d_2}{\beta^2} + \cdots + (-1)^n\frac{d_n}{\beta^n} +(-1)^n \frac{\pi_2 \big( K_{\beta}^n(\omega ,x)\big)}{\beta^n} .\]
Since $\pi_2 \big( K_{\beta}^n(\omega ,x)\big) \in [M^-,M^+]$, it follows that
\[ \Big| x - \sum_{k=1}^n (-1)^k\frac{d_k}{\beta^k} \Big| \, =\,
\frac{\pi_2 \big( K_{\beta}^n(\omega ,x)\big)}{\beta^n} \, \to \, 0
\qquad {\mbox{as }}\; n\to \infty .\]
This shows that for all $\omega \in \Omega$ and for all $x\in [M^-,M^+]$  one has that
\[ x\, =\, \sum_{k=1}^{\infty}(-1)^k \frac{d_k}{\beta^k}\, =\, \sum_{k=1}^{\infty}(-1)^k \frac{d_k(\omega ,x)}{\beta^k}.\]
The random procedure just described shows that with each $\omega \in \Omega$ corresponds an algorithm that produces expansions in base $\beta$. If we identify the point $(\omega ,x)$ with $(\omega, d_1(\omega ,x) d_2(\omega ,x)\cdots )$, then the action of
$K_{\beta}$ on the second coordinate corresponds to the left shift. We call the sequence $d_1(\omega ,x) d_2(\omega ,x)\cdots $ the random negative $\beta$-expansion of $x$ specified by $\omega$.

One can easily generalize the proof of Theorem 2 in \cite{DV05} to obtain the following theorem.
\begin{theorem} Let $x\in [M^-,M^+]$, and let $x=\sum_{n=1}^{\infty}(-1)^n \frac{b_n}{\beta^n}$ with $b_n\in \{0,1 \}$ be a representation of $x$ in base $-\beta$. Then there exists an $\omega \in \Omega $ such that $b_n=d_n(\omega,x).$
\end{theorem}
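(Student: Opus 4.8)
The plan is to construct the coin-tossing sequence $\omega$ directly from the given representation $b_1 b_2 \cdots$, feeding one new coin into $\omega$ every time the orbit lands in the switch region $S$, and then to verify by induction that $K_\beta$ driven by this $\omega$ reproduces exactly the digits $b_n$.

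First I would set $y_0 = x$ and define $y_n = -\beta y_{n-1} - b_n$ for $n \ge 1$. A one-line computation (the same identity that underlies Lemma~\ref{l:d}) shows that $y_n = \sum_{j=1}^{\infty} (-1)^j \beta^{-j} b_{n+j}$, so by the discussion at the start of Section~\ref{s:trfm} each $y_n$ lies in $[M^-,M^+]$ and carries the legitimate representation $b_{n+1} b_{n+2} \cdots$ in base $-\beta$. The crucial observation is then obtained by applying Lemma~\ref{l:firstdigit} to $y_{n-1}$, whose representation has first digit $b_n$: if $b_n = 0$ then $y_{n-1} \ge -\frac{1}{\beta(\beta^2-1)}$, hence $y_{n-1} \notin U_1$; and if $b_n = 1$ then $y_{n-1} \le \frac{1}{\beta^2-1} - \frac{1}{\beta}$, hence $y_{n-1} \notin U_0$. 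In other words, whenever $y_{n-1}$ lies in one of the uniqueness regions, the digit $b_n$ is forced to be exactly the digit $K_\beta$ would produce there; only when $y_{n-1} \in S$ is there a genuine choice.

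Next I would build $\omega$. Let $n_1 < n_2 < \cdots$ enumerate the indices $n \ge 1$ with $y_{n-1} \in S$, and set $\omega_i = b_{n_i}$; if there are only finitely many such indices the remaining coordinates of $\omega$ may be chosen arbitrarily. With this $\omega$ fixed, the proof reduces to showing, by induction on $n$, that $\pi_2\big(K_\beta^{n-1}(\omega,x)\big) = y_{n-1}$ together with $d_n(\omega,x) = b_n$. One must keep track of the first coordinate: after $n-1$ steps it equals $\sigma^{c}(\omega)$, where $c = \#\{\, 1 \le m \le n-1 : y_{m-1} \in S\,\}$ is the number of coins already consumed. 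Reading off $d_n = d_1\big(K_\beta^{n-1}(\omega,x)\big)$ from the definition: if $y_{n-1} \in U_1$ then $d_n = 1 = b_n$ and if $y_{n-1}\in U_0$ then $d_n = 0 = b_n$, by the forcing just established; and if $y_{n-1} \in S$ then this visit is the $(c+1)$-st, so $n = n_{c+1}$ and $d_n = (\sigma^{c}\omega)_1 = \omega_{c+1} = b_{n_{c+1}} = b_n$. In every case $\pi_2\big(K_\beta^{n}(\omega,x)\big) = -\beta y_{n-1} - d_n = -\beta y_{n-1} - b_n = y_n$, which closes the induction and yields $d_n(\omega,x) = b_n$ for all $n$.

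The computations are all routine; the only point needing care is the bookkeeping that matches the $i$-th coordinate of $\omega$ to the $i$-th entry of the orbit falling in $S$, and the check that the shift $\sigma$ in the first coordinate of $K_\beta$ keeps this matching synchronized. The one genuinely substantive ingredient is Lemma~\ref{l:firstdigit}, which is exactly what guarantees that no inconsistency can arise on the uniqueness regions $U_0$ and $U_1$; this is the negative-base analogue of the argument in the proof of Theorem~2 of \cite{DV05}.
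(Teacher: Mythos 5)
Your argument is correct, and it is essentially the proof the paper intends: the paper omits the details, stating the result is an easy generalization of Theorem~2 of \cite{DV05}, and your construction --- using Lemma~\ref{l:firstdigit} to show the digit is forced whenever the orbit point lies in $U_0$ or $U_1$, and feeding the digits produced at visits to $S$ into $\omega$, with the induction keeping the shift in the first coordinate synchronized --- is exactly that generalization carried out for base $-\beta$.
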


Using the map $K_{\beta}$, one can generate greedy expansions in base $-\beta$, i.e., expansions that are the largest in the alternate ordering.
We have the following theorem.

\begin{theorem} Let $x\in [M^-,M^+]$, then there exists $\omega \in \Omega$ such that $d_1(\omega ,x) d_2(\omega ,x)\cdots $, the random negative $\beta$-expansion of $x$ specified by $\omega$, is the greedy expansion of $x$.
\end{theorem}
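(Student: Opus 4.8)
The plan is to build $\omega$ greedily, coordinate by coordinate, so that at each switch-region visit we make the choice that corresponds to the larger digit in the alternate ordering, and then argue that the resulting expansion is in fact maximal among \emph{all} negative $\beta$-expansions of $x$ with digits in $\{0,1\}$. First I would recall that by the previous theorem, every representation $x=\sum_{n\ge1}(-1)^n\frac{b_n}{\beta^n}$ with $b_n\in\{0,1\}$ arises as $d_n(\omega,x)$ for some $\omega\in\Omega$; so it suffices to produce the $\omega$ whose induced digit sequence is $\succeq$-maximal. The key observation is the one already flagged in the paragraph preceding the odd greedy transformation: if $x$ has two expansions differing first at index $k$, with $b_k=0$ in one and $b_k=1$ in the other, then the expansion with $b_k=0$ is the larger one exactly when $k$ is odd, and the expansion with $b_k=1$ is the larger one exactly when $k$ is even (this is precisely the definition of $\prec$). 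On the uniqueness regions $U_0,U_1$ there is no choice, so all the freedom sits at the switch region $S$, and by Lemma~\ref{l:firstdigit} a point in $S$ can legitimately take first digit $0$ or $1$.

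The construction: run the orbit of $x$ under $K_\beta$, and define $\omega$ inductively. Whenever the current point $\pi_2(K_\beta^{n-1}(\omega,x))$ lands in $U_0$ or $U_1$, the digit $d_n$ is forced and $\omega$ is not consulted. When it lands in $S$ at step $n$, we set $\omega$'s next unused coordinate to $0$ if $n$ is odd and to $1$ if $n$ is even; equivalently $d_n=0$ for odd $n$ and $d_n=1$ for even $n$ at every switch visit. This is well-defined because each switch visit consumes exactly one coordinate of $\omega$ via the shift $\sigma$, and it fully specifies $\omega$ (coordinates never consulted may be chosen arbitrarily). Write $b=b(x)$ for the digit sequence $d(\omega,x)$ so produced.

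To see $b$ is greedy, suppose $c=c_1c_2\cdots$ is any negative $\beta$-expansion of $x$ with $c\succ b$. Let $k$ be the first index where $c_k\ne b_k$; then $b_1\cdots b_{k-1}=c_1\cdots c_{k-1}$, so the tails satisfy $\sum_{j\ge k}(-1)^j\frac{b_j}{\beta^j}=\sum_{j\ge k}(-1)^j\frac{c_j}{\beta^j}$, i.e. $R^{k-1}$-images (in the appropriate sense) agree: the point $y:=\pi_2(K_\beta^{k-1}(\omega,x))$ has both $\sum_{j\ge1}(-1)^j\frac{b_{k-1+j}}{\beta^{j}}$ and the analogous $c$-sum equal to $(-1)^{k-1}\beta^{k-1}(x-\sum_{j<k}\cdots)$, hence $y$ has two expansions with distinct first digit $b_k\ne c_k$. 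By Lemma~\ref{l:firstdigit}, $y\in S$. But then, by the construction, $b_k=0$ if $k$ is odd and $b_k=1$ if $k$ is even; in either case, the discussion above shows $b_k b_{k+1}\cdots \succeq c_k c_{k+1}\cdots$ is impossible to improve — more precisely, $c_k\ne b_k$ forces $b_k b_{k+1}\cdots \succ c_k c_{k+1}\cdots$ when $k$ is odd and $b_k=0<1=c_k$ would give $(-1)^k(b_k-c_k)>0$, contradicting $c\succ b$; and symmetrically for even $k$. This contradicts $c\succ b$, so $b$ is maximal, i.e. greedy.

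The main obstacle I anticipate is the bookkeeping in the last step: one must carefully translate "the first disagreement index $k$" back through the conjugacies and the $(-1)^k$ sign conventions to conclude that the disagreement point genuinely lies in the switch region $S$ (this uses Lemma~\ref{l:firstdigit} in the contrapositive direction) and that our rule for $\omega$ always picks the alternate-order-maximal digit there. A secondary subtlety is confirming that the greedy choices at successive switch visits do not interfere — but since each visit consumes a fresh coordinate of $\omega$ and the rule depends only on the \emph{parity of the current time index} $n$ (not on past choices), the construction is consistent and no backtracking is needed.
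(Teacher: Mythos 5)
Your proposal is correct and follows essentially the same route as the paper: you build $\omega$ by making the alternate-order-maximal choice at every switch-region visit (digit $0$ at odd indices, digit $1$ at even indices), leaving unconsulted coordinates free, exactly as the paper does via its nested cylinders $\Omega_1\supseteq\Omega_2\supseteq\cdots$ and a point of $\bigcap\Omega_n$. Your final verification that the resulting sequence dominates every other expansion (locating the first disagreement in $S$ via Lemma~\ref{l:firstdigit} and using the parity rule) is a correct spelling-out of the maximality claim the paper leaves implicit, so no changes are needed.
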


\begin{proof} Let $x\in [M^-,M^+]$, and set $x_0=x$.  We define inductively a sequence of cylinders $\Omega_1 \supseteq \Omega_2\supseteq \cdots$ as follows.
\begin{itemize}
\item If $x_0\in U_j$ for $j\in \{0,1\}$, then set $x_1=-\beta x-j$, $\ell_1(x)=0$ and $\Omega_1=\Omega$.
\item If $x_0\in S$, then set $x_1=-\beta x$, $\ell_1(x)=1$ and $\Omega_1=\{\omega\in \Omega: \omega_1=0\}$.
\end{itemize}
We now consider $x_1$.
\begin{itemize}
\item If $x_1\in U_j$ for $j\in \{0,1\}$, then set $x_2=-\beta x-j$, $\ell_2(x)=\ell_1(x)$ and $\Omega_2=\Omega_1$.
\item If $x_1\in S$, then set $x_2=-\beta x-1$, $\ell_2(x)=\ell_1(x)+1$ and $\Omega_2=\{\omega\in \Omega_1: \omega_{\ell_2(x)}=1\}$.
\end{itemize}
Suppose that $\{x_1,\cdots, x_n\}$, $\{\ell_1(x),\cdots,\ell_n(x)\}$ and  $\Omega_1 \supseteq \Omega_2\supseteq \cdots \supseteq \Omega_n$ have been defined.

\noindent
{\bf Case 1:} Assume $n$ is even.
\begin{itemize}
\item If $x_n\in U_j$ for $j\in\{0,1\}$, then set $x_{n+1}=-\beta x-j$, $\ell_{n+1}(x)=\ell_{n}(x)$ and $\Omega_{n+1}=\Omega_n$.
\item If $x_n\in S$, then set $x_{n+1}=-\beta x$, $\ell_{n+1}(x)=\ell_{n}(x)+1$ and $\Omega_{n+1}=\{\omega\in \Omega_n: \omega_{\ell_{n+1}(x)}=0\}$.
\end{itemize}
{\bf Case 2:} Assume $n$ is odd.
\begin{itemize}
\item If $x_n\in U_j$ for $j \in \{0,1\}$, then set $x_{n+1}=-\beta x-j$, $\ell_{n+1}(x)=\ell_{n}(x)$ and $\Omega_{n+1}=\Omega_n$.
\item If $x_n\in S$, then set $x_{n+1}=-\beta x-1$, $\ell_{n+1}(x)=\ell_{n}(x)+1$ and $\Omega_{n+1}=\{\omega\in \Omega_n: \omega_{\ell_{n+1}(x)}=1\}$.

\end{itemize}

If $K_{\beta}$ hits the switch regions infinitely many times, then $\ell_{n}(x) \rightarrow
\infty$ and, as is well known, $\bigcap \Omega_{n}$ consists of a single point. If this happens only finitely many times, then the set $\lbrace \ell_{n}(x) \, : \, n \in \mathbb{N} \rbrace$ is finite and
$\bigcap \Omega_{n}$ is exactly a cylinder set. In both cases $\bigcap \Omega_{n}$ is non-empty and for any $\omega \in \bigcap \Omega_{n}$, the random negative $\beta$-expansion
of $x$ specified by $\omega$, is the greedy expansion of $x$.
\end{proof}

\subsection{Uniqueness regions and unique expansions}

\begin{prop}
The set of $x\in [M^-,M^+]$ that has a unique negative $\beta$-expansion with digits in $\{0,1\}$ has Lebesgue measure zero. Moreover, if $\beta < \frac{1+\sqrt 5}{2}$, then $M^-$ and $M^+$ are the only two points with a unique negative $\beta$-expansion.
\end{prop}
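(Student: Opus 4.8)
\emph{Plan.} I would first recast uniqueness of the expansion dynamically. Recall the disjoint decomposition $[M^-,M^+]=U_1\cup S\cup U_0$ from~(\ref{q:switch}); by Lemma~\ref{l:firstdigit} the point $y$ has its first digit forced to $0$ exactly when $y>\tfrac1{\beta^2-1}-\tfrac1\beta$ (i.e.\ $y\in U_0$) and forced to $1$ exactly when $y<-\tfrac1{\beta(\beta^2-1)}$ (i.e.\ $y\in U_1$); hence all the maps $R_{\beta,\alpha}$, $\alpha\in S$, agree on $U_0\cup U_1$, and I write $\hat R$ for this common map, with $\hat Ry=-\beta y$ on $U_0$ and $\hat Ry=-\beta y-1$ on $U_1$. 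If $y\in S$ instead, a one-line check gives $-\beta y\in[M^-,M^+]$ and $-\beta y-1\in[M^-,M^+]$, so (every point of $[M^-,M^+]$ having a negative $\beta$-expansion) $y$ has one beginning with $0$ and one beginning with $1$. Thus $x$ has at least two expansions as soon as its $\hat R$-orbit meets $S$, and a unique expansion otherwise. So the task is to prove that
\[
U:=\{x\in[M^-,M^+]:\hat R^{\,n}x\notin S\text{ for all }n\ge0\}
\]
is Lebesgue-null, and that $U=\{M^-,M^+\}$ when $\beta<\varphi:=\tfrac{1+\sqrt5}{2}$.

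\emph{Step 1: $U$ is Lebesgue-null.} Fix $\alpha$ in the interior of $S$ (non-empty since $1<\beta<2$), set $R=R_{\beta,\alpha}$, and let $\mu$ be its acim. As recalled in Section~\ref{s:trfm} (after~\cite{LY78}), $\mathrm{supp}(\mu)$ contains an interval having $\alpha$ as an interior point; since $\alpha$ is also interior to $S$, this gives $\mu(S)>0$. Combining ergodicity of $\mu$ with the classical fact that the basin of $\mu$ has full Lebesgue measure, Lebesgue-a.e.\ $x$ has an $R$-orbit that enters the interior of $S$; as $R$ and $\hat R$ coincide off $S$, such an $x$ lies outside $U$. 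Hence $U$ is Lebesgue-null.

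\emph{Step 2: the case $\beta<\varphi$.} Then $\beta^2-\beta-1<0$. Write $c=\tfrac1{\beta^2-1}-\tfrac1\beta$ and $a=-\tfrac1{\beta(\beta^2-1)}$ for the two endpoints of $S$. Since $\hat R$ is decreasing on each of $U_0,U_1$ with $\hat R(M^+)=M^-$ and $\hat R(M^-)=M^+$, we get $\hat R(U_0)=[M^-,-\beta c)$ and $\hat R(U_1)=(-\beta a-1,M^+]$. Now $c\ge0$ (this is just $\beta^2-\beta-1\le0$), hence $-\beta c\le0\le c$ and $\hat R(U_0)\subseteq[M^-,c]=U_1\cup S$; and $-\beta a-1\ge a$ is equivalent to $(\beta+1)(\beta^2-\beta-1)\le0$, which holds, so $\hat R(U_1)\subseteq[a,M^+]=S\cup U_0$. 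Therefore an $\hat R$-orbit that stays in $U_0\cup U_1$ forever alternates between $U_0$ and $U_1$; its digit sequence is thus $\overline{01}$ or $\overline{10}$, whence (by $x=\sum_k(-1)^k b_k(x)\beta^{-k}$) $x=M^+$ or $x=M^-$. Conversely $M^+,M^-\in U$: the pair forms a period-two $\hat R$-orbit $M^+\mapsto M^-\mapsto M^+$ inside $U_0\cup U_1$, and being the extreme points of $[M^-,M^+]$ they are realized only by the extreme sequences $\overline{01},\overline{10}$. Hence $U=\{M^-,M^+\}$.

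\emph{Where the work is.} All of this is routine arithmetic except the single analytic input in Step~1 — the fact that the basin of the acim of a piecewise expanding interval map has full Lebesgue measure. To avoid invoking it one may split the nullity claim: for $\beta<\varphi$ it already follows from Step~2, while for $\varphi\le\beta<2$ one has $\beta>\sqrt2$, and then $\alpha=-\tfrac1{\beta(\beta^2-1)}$ yields an acim fully supported on $[M^-,M^+]$ (as computed in Section~\ref{s:trfm}), hence equivalent to Lebesgue, so that Birkhoff's ergodic theorem alone gives that a.e.\ point visits $S$.
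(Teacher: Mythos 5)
Your argument is correct, and its skeleton is the paper's: the two-point statement for $\beta<\frac{1+\sqrt 5}{2}$ is proved exactly as in the paper (the inclusions $\hat R(U_0)\subseteq U_1\cup S$ and $\hat R(U_1)\subseteq S\cup U_0$, both reducing to $\beta^2-\beta-1\le 0$, force an orbit avoiding $S$ to alternate, so its digit string is $\overline{01}$ or $\overline{10}$ and $x\in\{M^-,M^+\}$), and the measure-zero statement rests, as in the paper, on the Li--Yorke acim whose support contains a neighbourhood of $\alpha\in\operatorname{int}S$, plus ergodicity; your identification of $U_0,U_1,S$ is the intended one (the two endpoints in (\ref{q:switch}) are printed in interchanged order, as your computation with Lemma~\ref{l:firstdigit} shows). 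The genuine difference lies in how you pass from ``$\mu$-a.e.\ orbit meets $S$'' to ``Lebesgue-a.e.\ point has two expansions''. The paper fixes one $\alpha$, deduces $\mu(B)=0$, and then concludes only $\lambda(B\cap C)=0$ on the support $C$ of that acim, leaving points off $C$ unaddressed; you either invoke the full-basin property of acims for piecewise expanding maps (true, e.g.\ via the Hofbauer--Keller spectral decomposition, but a heavier input than anything the paper uses), or, preferably, split the parameter range: for $\beta<\frac{1+\sqrt 5}{2}$ nullity is already contained in the two-point statement, while for $\frac{1+\sqrt 5}{2}\le\beta<2$ one has $\beta>\sqrt 2$, so the choice $\alpha=-\frac{1}{\beta(\beta^2-1)}$ yields, by the paper's own example in Section~\ref{s:trfm}, a fully supported acim equivalent to Lebesgue on all of $[M^-,M^+]$, and plain ergodicity finishes. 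This second route stays entirely within the paper's toolkit and actually establishes the stated claim on the whole interval, which the written proof in the paper only gives on $C$; so your version is the same method carried out with a small but worthwhile repair rather than a different proof.
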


\begin{proof}
Recall from (\ref{q:switch}) that $[M^-,M^+] = U_0 \cup S \cup U_1$. A point $x \in [M^-,M^+]$ as a unique negative $\beta$-expansion if and only if for each choice of $\alpha \in S$ and for all $k \ge 0$, $R^k_{\beta, \alpha} x  \in U_0 \cup U_1$.  Fix $\alpha \in S$, set $R=R_{\beta, \alpha}$ and let $\mu$ be the unique, ergodic acim for $R$. The support of $\mu$ contains an interval with $\alpha$ in its interior. Let $C$ denote the support of $\mu$, then $\mu(C \cap S)>0$. Let $B$ be the set of points in $[M^-,M^+]$ with a unique negative $\beta$-expansion with digits in $\{0,1\}$. Suppose that $\mu(B) >0$. By the ergodicity of $R$, there is a $k$, such that $\mu(B \cap R^{-k}(C \cap S))>0$, which gives a contradiction. Hence, $\mu(B)=0$. Since $\lambda$ and $\mu$ are equivalent on $C$, this implies that $\lambda(B\cap C)=0$, i.e., $\lambda$-a.e.~$x \in C$ has more than one expansion.

If $\beta \le \frac{1+\sqrt 5}{2}$, then $\beta^2-\beta-1 \le 0$ and thus
\[ \frac{1}{\beta^2-1} - \frac{1}{\beta}  \ge 0, \quad \text{ and } \quad -\frac{1}{\beta(\beta^2-1)} \le \frac{1}{\beta+1}.\]
Then $R U_0 \subseteq U_1 \cup S$ and $R U_1 \subseteq U_0 \cup S$. This implies that for each $x \in (M^-,M^+)$ there is a $k=k(x)$, such that $R^k x \in S$. Hence, the only points with a unique expansion are $M^-$ and $M^+$. This gives the second part of the proposition.
\end{proof}

\begin{remark}
{\rm Everything in this article except Sections 2.2 and 4.2 can be extended to more digits. In general, a class of transformations that generate negative $\beta$ expansions can be given for each combination of $\beta>1$ and set of real numbers $A=\{ a_0, \ldots, a_m\}$ that satisfy:
\begin{itemize}
\item $a_0 < a_1 < \cdots < a_m$,
\item $\displaystyle \max_{1 \le j \le m} (a_j-a_{j-1}) \le \frac{a_m-a_0}{\beta-1}$.
\end{itemize}
These transformations are given by choosing an $\alpha$ for each pair of digits $a_j, a_{j+1}$ and thus have $m$ points of discontiuity. Results from \cite{LasY73} imply that each of these transformations has an acim. The previously mentioned results from \cite{LY78} give that the number of ergodic components is at most $m$ and that the support of each acim is a forward invariant set, containing at least one of the points of discontinuity in its interior. To find the density, we can use the same trick from \cite{Hof81}. Also, the set of digit sequences is characterized in exactly the same way as for two digits, with a condition for each digit. To find a transformation that generates greedy expansions, we have to turn to a random transformation also here. This map can be constructed similarly to as was done in \cite{DK07} for $\beta$-expansions with arbitrary digits.}
\end{remark}

\bibliographystyle{alpha}
\bibliography{negbeta}
\end{document}